\documentclass[11pt,a4paper,twoside]{article}
\usepackage{etoolbox}  
\usepackage{forloop}    
\usepackage{amsthm,amsfonts,amsmath,amscd,amssymb}
\usepackage{latexsym}
\usepackage{euscript}
\usepackage{enumitem}
\usepackage{graphicx}
\usepackage{tikz}
\usepackage{caption}
\usepackage{subcaption}
\usepackage{cite}
\usepackage[utf8]{inputenc}
\usepackage[english]{babel}
\usepackage{xcolor}
\definecolor{darkgreen}{HTML}{00A000}
\definecolor{darkblue}{HTML}{0000A0}
\definecolor{darkred}{HTML}{D00000}
\usepackage[colorlinks=true]{hyperref}
\hypersetup{linkcolor=darkred, urlcolor=darkblue, citecolor=darkgreen}
%


\usepackage{jmpag}

\tolerance=9000
\textwidth=135mm
\textheight=216.5mm 
\oddsidemargin=0mm
\evensidemargin=25mm
\topmargin-10mm

\pagestyle{jmpag}



\begin{document}


\title[Gap control by singular Schr\"odinger operators]{Gap control by singular Schr\"odinger operators in a periodically structured metamaterial}



\author{Pavel Exner}
\address{Nuclear Physics Institute, Academy of Sciences of the Czech Republic,
Hlavn\'{i} 130, \v{R}e\v{z} near Prague, 25068, Czech Republic\\
Doppler Institute, Czech Technical University, B\v{r}ehov\'{a} 7, Prague, 11519, Czech Republic}
\email{exner@ujf.cas.cz}

\author{Andrii Khrabustovskyi}
\address{Institute of Applied Mathematics, Graz
Institute of Technology, Steyrergasse 30, Graz, 8010, Austria}
\email{khrabustovskyi@math.tugraz.at}


\BeginPaper 


\newcommand{\cupl}{\bigcup\limits}
\newcommand{\supp}{\mathrm{supp}}
\newcommand{\suml}{\sum\limits}
\newcommand{\intl}{\int\limits}
\newcommand{\liml}{\lim\limits}
\newcommand{\maxl}{\max\limits}
\newcommand{\minl}{\min\limits}
\newcommand{\e}{^\varepsilon}
\newcommand{\eps}{{\varepsilon}}
\newcommand{\LL}{\mathsf{L}^2}
\newcommand{\W}{\mathsf{H}^1}
\newcommand{\ds}{\displaystyle}
\newcommand{\dd}{\,\mathrm{d}}
\newcommand{\Hl}{\mathcal{H}}
\newcommand{\h}{\mathfrak{h}}
\newcommand{\HH}{\widehat{\mathcal{H}}}
\newcommand{\hh}{\widehat{\mathfrak{h}}}
\newcommand{\R}{\mathbb{R}}
\newcommand{\N}{\mathbb{N}}
\newcommand{\C}{\mathbb{C}}
\newcommand{\Z}{\mathbb{Z}}
\newcommand{\dom}{\mathrm{dom}}
\newcommand{\n}{\mathbf{n}}
\newcommand{\m}{\{1,\dots,m\}}
\newcommand{\Id}{\mathrm{I}}
\newcommand{\uu}{{\mathbf{u}}}
\newcommand{\restr}{\hspace{-1pt}\restriction}
\newcommand{\ext}{^{\rm ext}}
\newcommand{\inn}{^{\rm int}}
\newcommand{\gaij}{_{_{\Gamma_{ij}^\eps}}}
\newcommand{\gaj}{_{_{\Gamma_j}}}



\begin{center}\small\it\medskip
This paper is dedicated to Volodymyr Olexandrovych Marchenko\\
on the occasion of his jubilee\bigskip
\end{center}

\begin{abstract}
We consider a family $\{\mathcal{H}^\varepsilon\}_{\varepsilon>0}$ of $\varepsilon\mathbb{Z}^n$-periodic Schr\"odinger operators with $\delta'$-interactions supported on a lattice of closed compact surfaces; within a minimal period cell one has $m\in\N$ surfaces. We show that in the limit when $\varepsilon\to 0$ and the interactions strengths are appropriately scaled, $\mathcal{H}^\varepsilon$ has at most $m$ gaps within finite intervals, and moreover, the limiting behavior of the first $m$ gaps can be completely controlled through a suitable choice of those surfaces and of the interactions strengths.

\key{periodic Schr\"{o}dinger operators, $\delta'$ interaction, spectral gaps, eigenvalue asymptotics.}

\msc{35P05, 35P20, 35J10, 35B27}
\end{abstract}


\section{Introduction\label{sec1}}

Spectral analysis of operators with periodic coefficients is a traditional topic in mathematical physics. It received a new strong motivation recently coming from the advances in investigation of \emph{metamaterials} of various sorts. One of the central questions concerns the structure of spectral gaps in view of their importance for conductivity properties of such substances, in particular, the possibility of engineering the gap structure by choosing a properly devised material texture. In the present paper we investigate this problem for a class of such operators; we are going to show that using a suitable lattice of `traps' arranged periodically in combination with a scaling transformation that makes these traps smaller and  weaker one can approximate any prescribed finite family of spectral gaps. Let us recall in this connection that similar ideas can also appear in a different context, for instance, concerning the gap creation by `decoration' of quantum graphs \cite{BK15}, \cite[Sec.~5.1]{BK}.

The idea to employ $\delta'$ traps was first used in our recent paper \cite{EK15} where we demonstrated that it can provide an approximation to the first spectral gap in the particular case of operators used to model \emph{nanowires} regarding them as electron waveguides. In the said paper we focused our attention to guides with Neumann boundary characteristic for metallic nanowires, and we also supposed that the scaling makes the duct thin. Here we extend this result in two directions. First of all, we suppose that the family of traps is periodic in more than one direction, and secondly, we manage to get an approximation with any finite number of prescribed gaps. What is equally important, however, not only the present result is more general but also the method we employ is different from that used in \cite{EK15} where the argument was based on eigenvalue convergence for the elements of the fibre decomposition by constructing approximations for the  eigenfunctions.

In contrast, in the current paper we identify the limiting operators using Simon's results for a monotonic sequence of forms \cite{S78}. The convergence of the eigenvalues is then proven using a (slightly modified) lemma from \cite{EP05}. This allows us not only to prove the said convergence of eigenvalues, but also to estimate its rate. Location of the spectral gaps can be then controlled by a suitable choice of the interaction `strength' and the trap shapes, that is, surfaces supporting these interactions, following a result from\cite{Kh14}. In the next section we describe the problem properly and state the main result, Section~\ref{sec3} is then devoted to its proof; in the appendix we recall the above indicated lemma.


\section{\label{sec2} Setting of the problem and main result}

Let $m\in\N$ and let $\{\Omega_j\}_{j=1}^m$ be a family of  simply connected Lipschitz domains in $\R^n$, $n\in\N\setminus\{1\}$. We assume that
$$
\overline{\Omega_{j}}\cap \overline{\Omega_{j'}}=\emptyset\text{ as }j\not=j'\text{\quad and \quad }\overline{\cup_{j=1}^m \Omega_j}\subset Y:=(0,1)^n.
$$
Also, we set
$$\Omega_0:=Y\setminus\cupl_{j=1}^m\overline{\Omega_j}.$$
In what follows $\eps>0$ will be a small parameter. For $i\in\Z^n$ and $j\in\m$ we set
$$
\Gamma_{ij}\e:=\eps(\partial \Omega_j + i).
$$

Next we describe the family of operators $\Hl\e$ which will be the main object of our interest in this paper. We denote
$$\Gamma\e= \cupl_{i\in\Z^n}\, \cupl_{j=1}^m \Gamma_{ij}\e
$$
and introduce the sesquilinear form $\h\e$ in the Hilbert space $\LL(\R^n)$ via
\begin{multline}\label{h}
\h\e[u,v]:=\int_{\R^n\setminus \Gamma\e}\nabla u\cdot\nabla\bar{v} \dd x\\+ \eps \sum_{i\in\Z}\sum_{j=1}^m q_j\int_{\Gamma_{ij}\e}(u\restr\gaij\ext-u\restr\gaij\inn)\overline{(v\restr\gaij\ext-v\restr\gaij\inn)}\,\dd s,\quad q_j>0,
\end{multline}
with the form domain $\dom(\h\e)=\W (\R^n\setminus \Gamma\e)$. Here $f\restr\gaij\ext$ (respectively, $f\restr\gaij\inn$) stands for the trace of the function $f$ taken from the exterior (respectively, interior) side of ${\Gamma_{ij}\e}$; $\dd s$ is the `area' measure on $\Gamma_{ij}\e$.

\begin{remark}
{\rm From the viewpoint of the physical motivation mentioned in the introduction the cases $n=2,3$ are important, however, there is no problem in stating and proving the result for any dimension; what matters is that the codimension of the interaction support is one. In general the trap lattice may have different periods in different dimensions but using suitable scaling transformations one can reduce such situations to the case considered here.}
\end{remark}

The definition of $\h\e[u,v]$ makes sense: the second sum in \eqref{h} is finite as one can check applying the standard trace inequalities within each period cell. Furthermore, it is straightforward to check that the form $\h\e[u,v]$ is densely defined, closed, and positive. Then by the first representation theorem \cite[Chapter 6, Theorem 2.1]{K66} there is a the unique self-adjoint and positive operator associated with the form $\h\e$, which we denote as $\Hl\e$,
\begin{gather*}
(\Hl\e u,v)_{\LL(\R^n)}= \h\e[u,v],\quad\forall u\in
\dom(\Hl\e),\ \forall  v\in \dom(\h\e).
\end{gather*}

Let $u\in\dom(\Hl\e)\cap \mathsf{C}^2(\R^n\setminus \Gamma\e)$. Integrating by parts one can easily show that
$$
(\Hl\e u) (x)=-\Delta u(x)\quad\text{at }\;x\in\R^n\setminus \Gamma\e,
$$
while on   $\Gamma_{ij}\e$ one has
the following interface matching conditions,
\begin{gather*}
\left({\partial_\n u }\right)\restr\gaij\ext=\left({\partial_\n u }\right)\restr\gaij\inn=\eps q_j(u\restr\gaij\ext-u\restr\gaij\inn),
\end{gather*}
where $\partial_\n$ is the derivatve along the outward-pointing unit normal to $\Gamma_{ij}\e$. This supports our interpretation of $\Hl\e$ as the Hamiltonians describing an lattice of periodically spaced obstacles, or `traps' in the form of given by a $\delta'$ interaction supported by $\Gamma_{ij}\e$; the interaction becomes `weak' as $\eps\to 0$. For more details on Schr\"odinger operators in $\R^n$ with  $\delta'$ interactions supported by hypersurfaces we refer to \cite{BEL14,BLL13}.

We denote by $\sigma(\Hl\e)$ the spectrum of $\Hl\e$. Due to the Floquet-Bloch theory $\sigma(\Hl\e)$ is a locally finite union of compact intervals called \textit{bands}. In general the bands may touch each other or even overlap. The non-empty bounded open interval $(A,\b)\subset\R$ is called a \textit{gap} in the spectrum of $\Hl\e$ if
$$
(A,B)\cap\sigma(\Hl\e)=\emptyset,\quad A,B\in\sigma(\Hl\e).
$$

First we give a simple estimate from above to the number of gaps.

\begin{proposition}\label{prop1}
The spectrum $\sigma(\Hl\e)$ has at most $m$ gaps within the interval $[0,\Lambda\eps^{-2}]$ with some constant $\Lambda>0$ depending on the set $\Omega_0$ only.
\end{proposition}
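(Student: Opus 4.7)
The plan is to combine a Floquet--Bloch decomposition with a Neumann bracketing on the period cell and a Poincar\'e--Wirtinger estimate. By $\eps\Z^n$-periodicity of $\Hl\e$ we have $\sigma(\Hl\e)=\cupl_{k\ge1}B_k$, where $B_k:=\{\lambda_k(\theta):\theta\in[0,2\pi)^n\}$ and $\lambda_k(\theta)$ is the $k$-th eigenvalue of the fiber operator on $\LL(Y\e)$ with $Y\e:=\eps Y$, obtained from $\h\e$ by imposing quasi-periodic boundary conditions on $\partial Y\e$ with phase $\theta$. Enlarging the fiber form domain to the full space $\W(Y\e\setminus\Gamma\e)$ (i.e.\ dropping the phase constraint) produces the Neumann cell operator $\Hl^{\eps,N}$ with discrete spectrum $\{\mu_k^N(\eps)\}_{k\ge1}$; by the min-max principle $\mu_k^N(\eps)\le\lambda_k(\theta)$ for every $\theta$, whence $B_k\subset[\mu_k^N(\eps),+\infty)$.

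The heart of the argument is the lower bound $\mu_{m+2}^N(\eps)\ge\Lambda\eps^{-2}$. I would apply min-max with the $(m+1)$-dimensional test subspace
$$
W:=\operatorname{span}\bigl\{\chi_{\eps\Omega_j}:j=0,1,\dots,m\bigr\}\subset\LL(Y\e),
$$
formed by the indicators of the $m+1$ connected components of $Y\e\setminus\Gamma\e$. Any $u\in\dom(\h^{\eps,N})$ orthogonal to $W$ has mean zero on each $\eps\Omega_j$, so the Poincar\'e--Wirtinger inequality, rescaled from $\Omega_j$ to $\eps\Omega_j$, yields a constant $C>0$ with
$$
\int_{\eps\Omega_j}|u|^2\dd x\le C\eps^2\int_{\eps\Omega_j}|\nabla u|^2\dd x\qquad\text{for each }j=0,1,\dots,m.
$$
Summing over $j$ and discarding the nonnegative $\delta'$ boundary contribution in $\h^{\eps,N}[u]$ gives $\|u\|^2_{\LL(Y\e)}\le C\eps^2\,\h^{\eps,N}[u]$, so by min-max $\mu_{m+2}^N(\eps)\ge\Lambda\eps^{-2}$ with $\Lambda:=1/C$.

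For every $k\ge m+2$ the band $B_k$ then lies above $\Lambda\eps^{-2}$, so at most $m+1$ bands meet $[0,\Lambda\eps^{-2}]$, leaving at most $m$ gaps within that interval. The main obstacle is to ensure that $\Lambda$ depends only on $\Omega_0$, as the statement claims: the crude Poincar\'e--Wirtinger bound above produces $C=\max_{j=0,\dots,m}C(\Omega_j)$, which depends on all the interior pieces as well. To match the stated dependence one apparently has to use the jump term in $\h\e$, together with a suitable trace inequality on each $\eps\partial\Omega_j$, in order to control $\int_{\eps\Omega_j}|u|^2$ for $j\ge1$ by quantities living on $\eps\Omega_0$ alone; once such a refinement is in hand, the remainder of the argument is unchanged.
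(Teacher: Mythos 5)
Your argument is essentially the paper's: Floquet--Bloch reduction, Neumann bracketing on the period cell, and the observation that, apart from the $m+1$ piecewise-constant modes, every Rayleigh quotient is of order $\eps^{-2}$. The paper packages this last step as a comparison with a decoupled direct sum of Neumann Laplacians on $\Omega_0,\dots,\Omega_m$ (obtained by discarding the nonnegative jump term), whose first $m$ eigenvalues vanish and whose next eigenvalue is $\eps^{-2}$ times a fixed constant; your max--min over the orthogonal complement of $\mathrm{span}\{\chi_{\eps\Omega_j}\}$ combined with Poincar\'e--Wirtinger is the same estimate in variational clothing, and it is correct. The only structural difference is that the paper works fibre by fibre, bounding $\lambda_{m+1,\phi}^\eps$ from below for each quasimomentum $\phi$ (which in addition shows that the $(m+1)$-th band reaches up to $\Lambda\eps^{-2}$), whereas your bound on $\mu_{m+2}^N$ locates the bottom of the $(m+2)$-th band; either version yields the count of at most $m$ gaps.

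The one place you stop short is the claim that $\Lambda$ depends on $\Omega_0$ only, and here two remarks are in order. First, the repair you suggest cannot work: the jump term carries the weight $\eps q_j$ and therefore contributes only $O(1)$ to the quadratic form -- indeed the whole point of that scaling is that it produces the finite limits $A_j,B_j$ -- so no trace inequality can convert it into something of size $\eps^{-2}$, which is the scale needed to lift an eigenvalue out of the window. Second, the paper's actual device is the decoupling above, in which the interior pieces enter only through their zero modes and the constant $\Lambda_\phi$ is read off from the $\Omega_0$-piece alone. If you inspect that decoupled operator carefully, its $(m+1)$-th eigenvalue is $\eps^{-2}\min\bigl(\Lambda_\phi,\mu_2(\Omega_1),\dots,\mu_2(\Omega_m)\bigr)$, where $\mu_2(\Omega_j)$ denotes the first nonzero Neumann eigenvalue of $\Omega_j$; so the constant the argument really produces involves the interior pieces exactly as in your ``crude'' bound, and the phrase ``depending on $\Omega_0$ only'' refers to the explicit formula \eqref{Lambda}, which the decoupling delivers verbatim when $\Lambda_\phi\le\min_j\mu_2(\Omega_j)$ and after taking the minimum with $\min_j\mu_2(\Omega_j)$ otherwise. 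In short: your proof is sound, your worry is legitimate, and the resolution is not the jump term but simply accepting the constant that your Poincar\'e argument gives.
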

The proof of this proposition is simple, but we postpone it to Section~\ref{sec3}, cf. Corollary~\ref{coro1}, since we need to do some preliminary work first. The constant $\Lambda$ is given by \eqref{Lambda}.

\smallskip

Our main goal  is to detect gaps in the spectrum of $\Hl\e$ within the interval $[0,\Lambda\eps^{-2}]$ and to describe their asymptotic behavior as $\eps\to 0$. To state the result we have to introduce some notations.

In what follows we denote by $C$, $C_1$, etc.\ generic constants being independent of $\eps$ and of functions appearing in the estimates and equalities where these constants occur, however, they may depend on $n$, $\Omega_j$ and $q_j$.

For $j\in\m$ we set
\begin{gather*}
A_j:={q_j |\partial \Omega_j|\over |\Omega_j|},
\end{gather*}
where the symbol $|\cdot|$ serves both for the volume of domain in $\R^n$ and for the `area' of $(n-1)$-dimensional surface in $\R^n$. We assume that the domains $\Omega_j$ and the numbers $q_j$ are chosen in such a way that
\begin{gather}\label{alpha-cond}
A_j<A_{j+1},\ j\in\{1,\dots,m-1\}.
\end{gather}
Furthermore, we consider the following rational function,
\begin{gather}\label{mu_eq}
F(\lambda):=1+\sum_{j=1}^m{A_j |\Omega_j|\over |\Omega_0|(A_j-\lambda)}.
\end{gather}
It is easy to show that $F(\lambda)$ has exactly $m$ roots, those are real and interlace with $A_j$ provided \eqref{alpha-cond} holds. We denote
them $B_j$, $j\in\m$ assuming that they are renumbered in the ascending order,
\begin{gather}\label{inter}
A_j<B_j<A_{j+1},\quad j\in\{1,\dots,m-1\},\quad
A_m<B_m<\infty.
\end{gather}

Now we are in position to formulate the main results of this work.

\begin{theorem}\label{th1}
The spectrum of $\Hl\e$ has the following form within the interval $[0,\Lambda\eps^{-2}]$,
\begin{gather*}
\sigma(\Hl\e)\cap [0,\Lambda\eps^{-2}]=[0,\Lambda\eps^{-2}]\setminus \left(\bigcup\limits_{j=1}^m (A_j\e,B_j\e)\right).
\end{gather*}
The endpoints of the intervals $(A_j\e,B_j\e)$ satisfy
\begin{gather*}
A_j\e\in [A_j-C\eps,\,A_j],\quad
B_j\e\in [B_j-C\eps,\,B_j],
\end{gather*}
provided $\eps$ is small enough.
\end{theorem}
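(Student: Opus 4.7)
\medskip
\textbf{Proof plan.} I would analyse $\Hl\e$ fibre-wise via Floquet-Bloch decomposition, identify the limiting fibre operators using Simon's monotone-form theorem, and quantify eigenvalue convergence with the appendix lemma to pin down the band endpoints.

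By the $\eps\Z^n$-periodicity, $\Hl\e$ decomposes as $\int^\oplus_{[-\pi,\pi]^n}\Hl\e(\theta)\,\dd\theta$, with $\Hl\e(\theta)$ acting on the cell $\eps Y$ with $\theta$-quasi-periodic boundary conditions and with $\sigma(\Hl\e)=\bigcup_\theta\sigma(\Hl\e(\theta))$. Rescaling $y=x/\eps$ yields a unitary equivalence $\Hl\e(\theta)\cong\eps^{-2}\widehat{\Hl}\e(\theta)$, where $\widehat{\Hl}\e(\theta)$ is associated with the form
\[
\widehat{\h}\e(\theta)[u]=\int_Y|\nabla u|^2\dd y+\eps^2\sum_{j=1}^m q_j\int_{\partial\Omega_j}|u\ext-u\inn|^2\dd s
\]
on the $\theta$-quasi-periodic $\W$-functions with jumps across each $\partial\Omega_j$. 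An eigenvalue of $\Hl\e$ lies in $[0,\Lambda\eps^{-2}]$ precisely when the rescaled fibre eigenvalue $\mu_k\e(\theta):=\eps^{-2}\hat\lambda_k\e(\theta)$ lies in $[0,\Lambda]$, and Proposition~\ref{prop1} ensures that this happens for at most $m+1$ indices~$k$.

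The forms $\eps^{-2}\widehat{\h}\e(\theta)$ are monotonically non-decreasing as $\eps\to 0$, so by Simon's theorem \cite{S78} they converge in the strong resolvent sense to a limit operator $\widehat{\Hl}^0(\theta)$ supported on the finite-dimensional subspace of functions piecewise constant on $\{\Omega_0,\Omega_1,\dots,\Omega_m\}$ that are additionally $\theta$-quasi-periodic on $\partial Y$. Writing $u=\sum_{j=0}^m c_j\chi_{\Omega_j}$, the limit eigenvalue problem becomes $\sum_{j=1}^m q_j|\partial\Omega_j|\,|c_0-c_j|^2=\mu\sum_{j=0}^m|\Omega_j|\,|c_j|^2$, which I can solve explicitly. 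At $\theta=0$ there is no constraint: eliminating $c_j=c_0 A_j/(A_j-\mu)$ from the $j$-th equation and substituting into the $0$-th one produces exactly the secular equation $F(\mu)=0$, whose roots are $0, B_1,\dots,B_m$. At $\theta\neq 0$ the quasi-periodicity forces $c_0=0$, the equations decouple, and the eigenvalues are $A_1,\dots,A_m$.

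The modified lemma of \cite{EP05} recalled in the appendix, applied to the pair $\bigl(\eps^{-2}\widehat{\Hl}\e(\theta),\widehat{\Hl}^0(\theta)\bigr)$, produces a two-sided $O(\eps)$ bound, uniform in $\theta$, for each eigenvalue $\mu_k\e(\theta)$ lying in the window $[0,\Lambda]$. Since $\mu_k\e(\theta)$ depends continuously on $\theta$, its range — i.e.\ the $k$-th band of $\Hl\e$ intersected with $[0,\Lambda\eps^{-2}]$ — is an interval connecting the value $\mu_k\e(0)\to B_{k-1}$ (with the convention $B_0:=0$) to $\mu_k\e(\theta)\to A_k$ for $\theta$ away from $0$; this is precisely the band pattern for the piecewise-constant model treated in \cite{Kh14}, yielding $[B_{k-1}\e,A_k\e]$ with $|B_{k-1}\e-B_{k-1}|,|A_k\e-A_k|\leq C\eps$, and Proposition~\ref{prop1} rules out any additional gaps in $[0,\Lambda\eps^{-2}]$. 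The technical crux I anticipate is the $\theta$-uniform $O(\eps)$ rate, because the dimension of the limit problem drops from $m+1$ at $\theta=0$ to $m$ at $\theta\neq 0$, so the convergence rate could in principle degenerate near $\theta=0$; controlling this degeneracy — presumably by comparing, for all $\theta$, with a common $(m+1)$-dimensional auxiliary model — is precisely what the modified \cite{EP05} lemma is tailored to accomplish.
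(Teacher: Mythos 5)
Your overall architecture (rescaling to a fixed cell, Floquet--Bloch fibres, finite-dimensional limit forms identified via Simon's monotone convergence, quantitative rates from the Exner--Post lemma) matches the paper, and you identify the limit fibre spectra correctly: $0,B_1,\dots,B_m$ at $\theta=0$ and $A_1,\dots,A_m$ at $\theta\neq 0$. The gap sits exactly in the step you flag as the ``technical crux'': a two-sided $O(\eps)$ bound for $\mu_k\e(\theta)$ that is \emph{uniform in $\theta$} cannot exist, because $\mu_k\e(\cdot)$ is continuous in $\theta$ while its $\eps\to 0$ limit is discontinuous at $\theta=0$ (it jumps from $B_{k-1}$ to $A_k$). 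By the intermediate value theorem $\mu_k\e(\theta)$ attains every value between $\mu_k\e(0)\approx B_{k-1}$ and $\mu_k\e(\pi,\dots,\pi)\approx A_k$, and such intermediate values are within $C\eps$ of neither limit; so the statement you propose to apply is false as formulated, and ``comparing with a common $(m+1)$-dimensional auxiliary model'' remains a hope rather than an argument. What is actually needed --- and what your proposal never supplies --- is a $\theta$-independent \emph{lower} bound $\mu_k\e(\theta)\geq B_{k-1}-C\eps$ valid for all $\theta$ simultaneously: without it you cannot exclude that some band function dips into the putative gap $(A_{k-1}\e,B_{k-1}\e)$ at an intermediate quasimomentum, and the asserted band picture does not follow from pointwise-in-$\theta$ convergence alone.

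The paper closes precisely this hole with Dirichlet--Neumann bracketing, \eqref{enclosure}: $\lambda_{k,N}\e\leq\lambda_{k,\phi}\e\leq\lambda_{k,D}\e$ for every $\phi$, where $\HH_N\e$ and $\HH_D\e$ do not depend on $\phi$. The quantitative two-sided $O(\eps)$ analysis (Lemmata~\ref{lemmaN1} and \ref{lemmaN2}, together with their $\phi_0$-, $\phi_\pi$- and Dirichlet analogues) is then performed only for the four families $\lambda_{k,N}\e$, $\lambda_{k,\phi_0}\e$, $\lambda_{k,\phi_\pi}\e$, $\lambda_{k,D}\e$: the left edge of the $k$th band is squeezed between $\lambda_{k,N}\e$ and $\lambda_{k,\phi_0}\e$, both lying in $[B_{k-1}-C\eps,B_{k-1}]$, and the right edge between $\lambda_{k,\phi_\pi}\e$ and $\lambda_{k,D}\e$, both lying in $[A_k-C\eps,A_k]$. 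This sidesteps any uniformity-in-$\theta$ issue entirely. To salvage your route, replace the claimed uniform bound by this bracketing; your ``common auxiliary model'' should be the Neumann cell operator for the uniform lower bound and the Dirichlet one for the uniform upper bound.
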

\begin{remark} \label{r:small}
{\rm In the above theorem `provided $\eps$ is small enough' means $\eps<\eps_0$ for some $\eps_0$ which depends in general on $q_j$ and $\Omega_j$. It will be apparent from the proof, cf.~Lemma~\ref{lemmaN2}, that $\eps_0$ can be given explicitly, but the formula looks rather cumbersome, in particular, it depends on the constants appearing in the Poincar\'e and trace inequalities for $\Omega_j$.}
\end{remark}

Using a lemma from \cite{Kh14} \emph{one can choose the domains $\Omega_j$ and the numbers $q_j$ in such a way that the limiting intervals $(A_j,B_j)$ coincide with predefined segments}. Indeed, let us defined the map
$$
\mathcal{L}:\dom(\mathcal{L})\subset\R^{2m}\to\R^{2m},\quad (a_1,\dots,a_m,b_1,\dots,b_m)\overset{\mathcal{L}}\mapsto (A_1,\dots,A_m,B_1,\dots,B_m)
$$
with the domain
$$
\dom(\mathcal{L})=\bigg\{(a_1,\dots,a_m,b_1,\dots,b_m)\in\R^{2m}:\ a_j>0,\ b_j>0,\ \sum_{j=1}^m b_j<1,\ {a_j\over b_j}<{a_{j+1}\over b_{j+1}}\bigg\}
$$
acting as follows:
$\ds A_j={a_j\over b_j}$,  $B_j$ are the roots of the function
$$
\ds 1+\sum_{j=1}^m{A_j b_j\over b_0(A_j-\lambda)},\;\text{ where }\ds b_0:=1-\sum_{j=1}^m b_j,
$$
renumbered according to \eqref{inter}. The indicated result \cite[Lemma~2.1]{Kh14} then reads as follows:

\begin{lemma}\label{lemma1}
	${\mathcal{L}}$ maps $\mathrm{dom}(\mathcal{L})$ \textbf{onto} the set
	of $(A_1,\dots,A_m,B_1,\dots B_m)\in\R^{2m}$ satisfying \eqref{inter}. Moreover $\mathcal{L}$ is
	one-to-one and the inverse map $\mathcal{L}^{-1}$ is given by the
	following formul{\ae},
	\begin{gather}\label{inverse}
	a_j=
	A_j\ds{ \rho_j\over 1+\suml_{i=1}^m
		\rho_i},\quad b_j={\rho_j\over 1+\suml_{i=1}^m
		\rho_i},
	\end{gather}
	where
	\begin{gather*}
	\rho_j={B_j-A_j\over A_j}\prod\limits_{i={1,\dots,m}|i\not=
		j}\ds\left({B_i-A_j\over A_i-A_j}\right).
	\end{gather*}
\end{lemma}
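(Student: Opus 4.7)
The plan is to prove surjectivity by writing down the inverse explicitly as in \eqref{inverse}, and to obtain injectivity for free because the same computation, read in reverse, forces the values of the $b_j$. Fix a target tuple $(A_1,\dots,A_m,B_1,\dots,B_m)$ satisfying \eqref{inter}. First I would verify that each $\rho_j$ from the statement is strictly positive: a short sign check shows that for $i<j$ both $B_i-A_j$ and $A_i-A_j$ are negative (since $B_i<A_{i+1}\le A_j$), for $i>j$ both are positive, and the prefactor $(B_j-A_j)/A_j$ is manifestly positive. Consequently the proposed $b_j:=\rho_j/(1+\sum_i\rho_i)$ and $a_j:=A_j b_j$ are positive, while $b_0:=1-\sum_j b_j=1/(1+\sum_i\rho_i)>0$ guarantees $\sum_j b_j<1$. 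The monotonicity $a_j/b_j<a_{j+1}/b_{j+1}$ reduces to $A_j<A_{j+1}$, which is built into \eqref{inter}, so the candidate preimage lies in $\dom(\mathcal{L})$.

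The core step is to check that $\mathcal{L}$ sends this tuple back to the given target. The first $m$ coordinates hold by construction. For the last $m$, I would introduce the polynomial
$$
P(\lambda):=\prod_{k=1}^m(A_k-\lambda)+\frac{1}{b_0}\sum_{j=1}^m A_j b_j\prod_{k\neq j}(A_k-\lambda),
$$
so that the rational function from \eqref{mu_eq} (with $|\Omega_j|$ replaced by $b_j$ and $|\Omega_0|$ by $b_0$) equals $P(\lambda)/\prod_k(A_k-\lambda)$. It then suffices to prove $P(\lambda)\equiv\prod_{k=1}^m(B_k-\lambda)$. Both sides are polynomials of degree at most $m$ with leading coefficient $(-1)^m$, so their difference has degree at most $m-1$. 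The key calculation is the evaluation
$$
P(A_i)=\frac{A_i b_i}{b_0}\prod_{k\neq i}(A_k-A_i)=A_i\rho_i\prod_{k\neq i}(A_k-A_i),
$$
after which substituting the definition of $\rho_i$ causes the factors $A_k-A_i$ in the denominator to cancel with the trailing product, leaving exactly $\prod_k(B_k-A_i)$. Agreement of the two polynomials at the $m$ points $A_1,\dots,A_m$ together with the degree bound forces the desired identity, so the roots of the defining rational function are precisely $B_1,\dots,B_m$, already in ascending order by \eqref{inter}.

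Injectivity then follows by reading the same identity backwards: any preimage of $(A_1,\dots,A_m,B_1,\dots,B_m)$ must satisfy $a_i=A_i b_i$, and evaluating the forced equality $P(A_i)=\prod_k(B_k-A_i)$ uniquely determines $b_i/b_0=\rho_i$; combined with $b_0=1-\sum_j b_j$ this pins down $b_0=1/(1+\sum_i\rho_i)$ and hence every $b_j$ and $a_j$, reproducing \eqref{inverse}. The only genuine obstacle is bookkeeping: one has to keep the combinatorics of products straight when evaluating $P(A_i)$ and to confirm the telescoping cancellation with $\prod_{k\neq i}(A_k-A_i)$. No analysis is required, which is why the result is purely algebraic—a statement about partial fractions—and can be handled within the framework of \cite{Kh14}.
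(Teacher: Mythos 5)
Your argument is correct, and it is worth noting that the paper itself offers no proof of this statement at all: Lemma~\ref{lemma1} is imported verbatim as \cite[Lemma~2.1]{Kh14}, so the only "proof" in the text is the citation. What you supply is a self-contained verification via Lagrange interpolation: clearing denominators in \eqref{mu_eq} to get the degree-$m$ polynomial $P$, evaluating $P(A_i)$ (where all but one term of the sum vanish), observing that the definition of $\rho_i$ is rigged exactly so that $P(A_i)=\prod_k(B_k-A_i)$, and concluding $P\equiv\prod_k(B_k-\lambda)$ from agreement at $m$ distinct points plus the matching leading coefficient $(-1)^m$. This is essentially the partial-fraction mechanism underlying \cite{Kh14}, and it buys the reader a proof that the paper delegates elsewhere. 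Two small points you should make explicit rather than tacit. First, your sign check for $\rho_j$ and the step $a_j=A_jb_j>0$ use $A_j>0$, which is not literally part of \eqref{inter}; it is automatic for anything in the range of $\mathcal{L}$ (since $A_j=a_j/b_j$ with $a_j,b_j>0$), so the "onto" claim should be read as onto tuples with $0<A_1$ satisfying \eqref{inter} --- this is how the source formulates it. Second, your injectivity argument presupposes that for an arbitrary point of $\dom(\mathcal{L})$ the function \eqref{mu_eq} really has $m$ simple real roots interlacing the $A_j$ (so that $P=\prod_k(B_k-\lambda)$ is "forced"); the paper disposes of this separately just before the lemma ("It is easy to show that $F(\lambda)$ has exactly $m$ roots\dots"), by the standard monotonicity/sign-change argument on each interval $(A_j,A_{j+1})$ and on $(A_m,\infty)$. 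With those two remarks added, your proof is complete.
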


Now it is clear how to choose the sought $\Omega_j$ and $q_j$, cf. the statement following Remark~\ref{r:small}.  Specifically, assume that intervals $(A_j,B_j)$ satisfying \eqref{inter} are given. We define for them the numbers $a_j$, $b_j$ by formul{\ae} \eqref{inverse}, and then we choose domains $\Omega_j$, $j\in\m$, in such a way that $|\Omega_j|=b_j$. Obviously, this can be always done since $b_j>0$ and $\sum_{j=1}^m b_j<1\,$; recall that the closures of $\Omega_j$ must be pairwise disjoint by assumption and belong to the unit cube. Needless to say, such a choice is not unique. Finally, with these $\Omega_j$ we define the numbers $q_j$ by $q_j={A_j |\Omega_j|\over |\partial \Omega_j|}$.

\section{Proof of the results\label{sec3}}

\subsection{Preliminaries}

We introduce the sets
\begin{itemize}
\setlength{\itemsep}{5pt}
\item $\Gamma_j=\partial \Omega_j$, where $j\in\m$
\item $\Gamma_{ij}=\partial \Omega_j + i$, where $i\in\Z^n$, $j\in\m$
\item $\Gamma=\cup_{i\in\Z^n}\cup_{j\in \m}\Gamma_{ij} $.
\end{itemize}

The operator $\Hl\e$ is by construction $\Z^n$-periodic with the period cell $\eps Y$. It is convenient to perform a change of coordinates $x = \eps y$ (from the old coordinates~$x$ to the new coordinates~$y$) that would allow us to work with an $\eps$-independent period cell. More precisely, we introduce the sesquilinear form $\hh\e$ in the Hilbert space $\LL(\R^n)$ via
\begin{multline*}
\hh\e[u,v]:={1\over\eps^2}\int_{\R^n\setminus \Gamma}\nabla u\cdot\nabla\bar{v} \dd x\\+ \sum_{i\in\Z}\sum_{j=1}^m q_j\int_{\Gamma_{ij}}(u\restr_{_{\Gamma_{ij}}}\ext-u\restr_{_{\Gamma_{ij}}}\inn)\overline{(v\restr_{_{\Gamma_{ij}}}\ext-v\restr_{_{\Gamma_{ij}}}\inn)}\,\dd s,\quad q_j>0,
\end{multline*}
with the form domain $\dom(\hh\e)=\W (\R^n\setminus \Gamma)$.  Finally, by $\HH\e$ we denote the unique self-adjoint and positive operator  associated with the form $\hh\e$. It is easy to see that
$$
\sigma(\HH\e)=\sigma(\Hl\e).
$$
Moreover, the operator $\HH\e$ is periodic with respect to the $\eps$-independent period cell $Y$.

The Floquet-Bloch theory -- see, e.g., \cite{BHPW11,Ku93,RS78} -- establishes a relationship between $\sigma(\HH\e)$ and the spectra of certain operators on $Y$. Let $\phi=(\phi_1,\dots,\phi_n)\in [0,2\pi)^n$, the dual cell to $Y$. We introduce the space $\W_\phi(Y\setminus \cup_{j=1}^m \Gamma_j)$, which consists of functions from $\W(Y\setminus \cup_{j=1}^m \Gamma_j)$ satisfying the following conditions at the opposite faces of $\partial Y$, usually referred to as \emph{quasi-periodic boundary conditions},
\begin{multline}\label{quasi}
\forall k\in\{1,\dots,n\}:\  u(x+ e_k)=\exp(i\phi_k) u(x)\quad \text{for}\;x=\underset{^{\overset{\qquad\quad\uparrow}{\qquad\quad
k\text{-th place}}\qquad }}{(x_1,x_2,\dots,0,\dots,x_n)},
\end{multline}
where $e_k={(0,0,\dots,1,\dots,0)}$.

In the space $\LL(Y)$ we introduce the sesquilinear form $\hh_\phi\e$ defined by
\begin{multline}\label{h-phi}
\hh_\phi\e[u,v]:={1\over\eps^2}\int_{Y \setminus \cup_{j=1}^m \Gamma_j}\nabla u\cdot\nabla\bar{v} \dd x\\+\suml_{j=1}^m q_j\int_{\Gamma_j}(u\restr\gaj\ext-u\restr\gaj\inn)\overline{(v\restr\gaj\ext-v\restr\gaj\inn)}\dd s
\end{multline}
with the domain $\W_\phi(Y \setminus \cup_{j=1}^m \Gamma_j)$. We denote by $\HH_\phi^{\eps}$ the associated self-adjoint and positive operator. Its
domain consists of functions $u\in \mathsf{H}^2(Y \setminus \cup_{j=1}^m \Gamma_j)\cap \W_\phi(Y\setminus \cup_{j=1}^m \Gamma_j)$ satisfying also
\begin{multline}\label{quasi+}
\forall k\in\{1,\dots,n\}:\\{\partial u\over \partial x_k}(x+ e_k)=\exp(i\phi_k) {\partial u\over \partial x_k}(x)\quad \text{
for}\;x=\underset{^{\overset{\qquad\quad\uparrow}{\qquad\quad
k\text{-th place}}\qquad }}{(x_1,x_2,\dots,0,\dots,x_n)}
\end{multline}
and the following $\delta'$ interface matching conditions on $\Gamma_j$,
\begin{gather*}
\left({\partial_\n u }\right)\restr\gaj\ext=\left({\partial_\n u }\right)\restr\gaj\inn=  \eps^2 q_j(u\restr\gaj\ext-u\restr\gaj\inn),
\end{gather*}
where $\partial_\n$ is the derivative along the outward-pointing unit normal to $\Gamma_j$. The operator $\HH\e_\phi$ acts as
$$
(\HH\e_\phi u)\restriction_{\Omega_j}=-{1\over\eps^2}(\Delta u)\restriction_{\Omega_j},\quad j\in\{0,\dots,m\}.
$$
The spectrum of $\HH_\phi\e$ is purely discrete. We denote by $\big\{\lambda_{k,\phi}\e\big\}_{k\in\mathbb{N}}$ the sequence of its eigenvalues  arranged in the ascending order and repeated according to their multiplicities.

According to the Floquet-Bloch theory we have
\begin{gather}\label{repres1}
\ds\sigma(\HH\e)=\cupl_{k=1}^\infty \cupl_{\phi\in [0,2\pi)^n}
\big\{\lambda_{k,\phi}\e\big\},
\end{gather}
and moreover, for any fixed $k\in\mathbb{N}$ the set $\cup_{\phi\in [0,2\pi)^n} \big\{\lambda_{k,\phi}\e\big\}$ is a compact interval, conventionally referred to as the $k$th spectral band.

Along with the operators $\HH\e_\phi$ we also introduce the operators $\HH\e_N$ and $\HH\e_D$, which differ from $\HH\e_\phi$ only by the boundary conditions at $\partial Y$: instead of the quasi-periodic conditions one imposes here the Neumann and the Dirichlet ones, respectively.
More precisely, we introduce in $\LL(Y)$ the sesquilinear forms $\hh\e_N$ and $\hh\e_D$ with the domains
$$
\dom(\hh\e_N)=\W (Y \setminus \cup_{j=1}^m \Gamma_j)\text{\quad and\quad }
\dom(\hh\e_D)=\left\{u\in \W(Y \setminus \cup_{j=1}^m \Gamma_j):\ u\restriction_{Y}=0\right\}
$$
and the action specified by \eqref{h-phi}; then $\HH\e_N$ and $\HH\e_D$ are the operators associated with these forms. The spectra of these operators  are purely discrete. We denote by $\big\{\lambda_{k,N}\e\big\}_{k\in\mathbb{N}}$ (respectively, $\big\{\lambda_{k,D}\e\big\}_{k\in\mathbb{N}}$) the sequence of eigenvalues of $\HH_N^{\eps}$ (respectively, of $\HH_D^{\eps}$) arranged in the ascending order and repeated according to their multiplicities. Since
$$
\forall\phi\in [0,2\pi)^n:\quad \dom(\hh\e_N)\supset\dom(\hh\e_\phi)\supset\dom(\hh\e_D),
$$
using the min-max principle \cite[Sec.~XIII.1]{RS78} we obtain
\begin{gather}\label{enclosure}
\forall k\in \mathbb{N},\ \forall\phi\in [0,2\pi)^n:\quad
\lambda_{k,N}\e \leq \lambda_{k,\phi}\e \leq
\lambda_{k,D}\e.
\end{gather}
For a fixed $\phi\in [0,2\pi)^n$ we denote by $\Delta_{N,\phi}(\Omega_0)$ the  Laplace operator on $\Omega_0$ subject to the Neumann conditions on $\cup_{j=1}^m\partial \Omega_j$ and conditions \eqref{quasi}, \eqref{quasi+} on $\partial Y$.

\begin{lemma}
For each $\phi\in [0,2\pi)^n$ one has
$$
{1\over\eps^2}\Lambda_\phi\leq \lambda_{m+1,\phi}\e,
$$
where $\Lambda_\phi$ is the smallest eigenvalue of the operator $-\Delta_{N,\phi}(\Omega_0)$.
\end{lemma}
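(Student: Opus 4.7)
My plan is to apply the Courant--Fischer min-max characterization
\[
\lambda_{m+1,\phi}\e=\sup_{\substack{F\subset\LL(Y)\\ \dim F=m}}\inf_{\substack{u\in\dom(\hh_\phi\e)\\ u\perp_{\LL(Y)}F,\,u\neq 0}}\frac{\hh_\phi\e[u,u]}{\|u\|_{\LL(Y)}^2},
\]
so it suffices to exhibit a single $m$-dimensional subspace $F\subset\LL(Y)$ for which the inner infimum is bounded below by $\Lambda_\phi/\eps^2$. I would take $F:=\mathrm{span}\{\chi_{\Omega_j}:j\in\m\}$, the span of the characteristic functions of the traps; this is $m$-dimensional because the $\Omega_j$ are pairwise disjoint. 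Any $u\in\dom(\hh_\phi\e)$ that is $\LL(Y)$-orthogonal to $F$ then satisfies $\int_{\Omega_j}u\,\dd x=0$ for every $j\in\m$.

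For such a $u$ I would drop the nonnegative $\delta'$ contribution to $\hh_\phi\e[u,u]$ and split the remaining gradient integral according to the decomposition $Y\setminus\bigcup_{j=1}^m\Gamma_j=\Omega_0\cup\bigcup_{j=1}^m\Omega_j$:
\[
\hh_\phi\e[u,u]\ge\frac{1}{\eps^2}\biggl(\int_{\Omega_0}|\nabla u|^2\,\dd x+\sum_{j=1}^m\int_{\Omega_j}|\nabla u|^2\,\dd x\biggr).
\]
The restriction $u|_{\Omega_0}$ automatically inherits the quasi-periodic conditions \eqref{quasi} on $\partial Y$ from belonging to $\dom(\hh_\phi\e)$, hence is an admissible trial function for $-\Delta_{N,\phi}(\Omega_0)$; the variational characterization of its smallest eigenvalue gives $\int_{\Omega_0}|\nabla u|^2\,\dd x\ge\Lambda_\phi\int_{\Omega_0}|u|^2\,\dd x$. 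On each $\Omega_j$, the zero-mean property activates the Poincar\'e--Wirtinger inequality $\int_{\Omega_j}|\nabla u|^2\,\dd x\ge\mu_j\int_{\Omega_j}|u|^2\,\dd x$, where $\mu_j>0$ denotes the first positive Neumann eigenvalue of $-\Delta$ on $\Omega_j$. Summing and using $\|u\|_{\LL(Y)}^2=\int_{\Omega_0}|u|^2\,\dd x+\sum_{j=1}^m\int_{\Omega_j}|u|^2\,\dd x$ then yields
\[
\hh_\phi\e[u,u]\ge\frac{\min\{\Lambda_\phi,\mu_1,\dots,\mu_m\}}{\eps^2}\|u\|_{\LL(Y)}^2,
\]
so min-max delivers $\lambda_{m+1,\phi}\e\ge\min\{\Lambda_\phi,\mu_1,\dots,\mu_m\}/\eps^2$.

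The main obstacle is the last step, namely upgrading this inequality to the sharp $\Lambda_\phi/\eps^2$. The argument as sketched is already tight whenever $\Lambda_\phi\le\mu_j$ for every $j$; note that inserting $v(x)=e^{\mathrm{i}\phi\cdot x}$ as a trial function on $\Omega_0$ gives $\Lambda_\phi\le|\phi|^2$, a purely dimensional constant, so the comparison is expected to hold for the geometries considered here. When it fails one must reinstate the $\delta'$ term that was discarded: since $u|_{\Omega_j}$ has vanishing mean while the trace of $u|_{\Omega_0}$ on $\Gamma_j$ is essentially independent of it, a large $\int_{\Omega_j}|u|^2$ forces a sizable jump across $\Gamma_j$, and $q_j\int_{\Gamma_j}|u\ext-u\inn|^2\,\dd s$ then recaptures the deficit $(\Lambda_\phi-\mu_j)\int_{\Omega_j}|u|^2$ via a trace-type inequality on $\Omega_j$. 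Carrying out that trace absorption quantitatively is where I expect the real work of the proof to sit.
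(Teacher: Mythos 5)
Your test-space argument is, in substance, the same as the paper's proof. The paper drops the nonnegative $\delta'$ term and compares $\HH\e_\phi$ with the decoupled operator $\bigl(-\eps^{-2}\Delta_{N,\phi}(\Omega_0)\bigr)\oplus\bigl(\oplus_{j=1}^m(-\eps^{-2}\Delta_N(\Omega_j))\bigr)$; your subspace $F=\mathrm{span}\{\chi_{\Omega_j}\}_{j=1}^m$ is exactly the span of the $m$ zero modes of that decoupled operator, so the max--min computation you perform and the paper's eigenvalue comparison yield the identical bound $\lambda_{m+1,\phi}\e\geq\eps^{-2}\min\{\Lambda_\phi,\mu_1,\dots,\mu_m\}$, with $\mu_j$ the first positive Neumann eigenvalue of $\Omega_j$.

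The point at which you stop is precisely the point at which the paper's own proof is silently incomplete: it asserts that the $(m+1)$th eigenvalue of the decoupled operator equals $\eps^{-2}\Lambda_\phi$, which holds if and only if $\Lambda_\phi\leq\mu_j$ for every $j$, and this is not implied by the hypotheses (the $\Omega_j$ are arbitrary simply connected Lipschitz domains; an elongated $\Omega_j$ makes $\mu_j$ as small as one likes, while $\Lambda_\phi$ depends only on $\Omega_0$ and $\phi$). So you should not expect to find the ``real work'' carried out in the paper --- it is not there. Moreover, your proposed repair via the $\delta'$ term cannot succeed: in $\hh\e_\phi$ that term carries no factor $\eps^{-2}$, hence it is $O(1)$ on normalized trial functions and cannot compensate a deficit of order $\eps^{-2}(\Lambda_\phi-\mu_j)$. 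Indeed, taking the $(m+1)$-dimensional trial space $V=\mathrm{span}\{\chi_{\Omega_1},\dots,\chi_{\Omega_m},w\}$ with $w$ the first nonconstant Neumann eigenfunction of some $\Omega_{j_0}$ extended by zero (admissible, since jumps across $\Gamma_{j_0}$ are allowed), the trace inequality gives $\hh\e_\phi[u,u]\leq(\eps^{-2}\mu_{j_0}+C)\|u\|^2_{\LL(Y)}$ on $V$, so $\lambda_{m+1,\phi}\e\leq\eps^{-2}\mu_{j_0}+C$ and the stated inequality actually fails for small $\eps$ whenever $\mu_{j_0}<\Lambda_\phi$. The correct conclusion of this argument --- and the one that should be propagated into Proposition~\ref{prop1} --- is the bound with $\Lambda_\phi$ replaced by $\min\{\Lambda_\phi,\mu_1,\dots,\mu_m\}$; with that reading your proof is complete and nothing downstream changes, except that the constant $\Lambda$ then depends on all the $\Omega_j$ rather than on $\Omega_0$ alone.
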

\begin{proof}
We consider the \emph{decoupled} operator
$$
\HH_{\phi,{\rm dec}}\e= \left(-{1\over\eps^2}\Delta_{N,\phi}(\Omega_0)\right)\oplus\left(\oplus_{j=1}^m \left(-{1\over\eps^2}\Delta_N(\Omega_j)\right)\right),
$$
where $\Delta_N(\Omega_j)$ is the Neumann Laplacian on $\Omega_j$, $j=1,\dots,m$. Since $q_j>0$ we get
\begin{gather}\label{h-h}
\hh_{\phi,{\rm dec}}\e\leq \hh_{\phi}\e,
\end{gather}
where $\hh_{\phi,{\rm dec}}\e$ is the form associated with $\HH_{\phi,{\rm dec}}\e$. Using the min-max principle, we conclude from \eqref{h-h} that the $k$th eigenvalue of $\HH_{\phi,{\rm dec}}\e$ is smaller or equal than the $k$th eigenvalue of $\HH_{\phi}\e$ for any $k\in\N$. It is clear that the first $m$ eigenvalues of $\HH_{\phi,{\rm dec}}\e$ are equal to zero, while the $(m+1)$th one equals $\eps^{-2}\Lambda_\phi$, whence we obtain the desired result.
\end{proof}

Now we set
\begin{gather}\label{Lambda}
\Lambda:=\max_{\phi\in[0,2\pi)^n}\Lambda_\phi.
\end{gather}
It is easy to see that $\Lambda<\infty$. Indeed, due to the min-max principle, $\Lambda\leq\Lambda_D$, where $\Lambda_D$ is the smallest eigenvalue of the Laplace operator in $\Omega_0$ subject to the Neumann conditions at $\cup_{j=1}^m\partial\Omega_j$ and the Dirichlet conditions at $\partial Y$.
Note, that $\Lambda\not=\Lambda_D$ in general.

From the above lemma and \eqref{repres1} we immediately obtain the following corollary justifying the claim of ~Proposition~\ref{prop1}:
\begin{corollary}\label{coro1}
$\sigma(\HH\e)$ (hence also $\sigma(\Hl\e)$) has at most $m$ gaps on the interval $[0,\Lambda\eps^2]$.
\end{corollary}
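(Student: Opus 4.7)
My plan is to combine the preceding lemma with the band picture coming from \eqref{repres1}. First I would denote the $k$-th spectral band by
\[
I_k\e := \cupl_{\phi \in [0,2\pi)^n} \{\lambda_{k,\phi}\e\}, \qquad k \in \N,
\]
so that $\sigma(\HH\e) = \cupl_{k \in \N} I_k\e$, and record two standard facts: each $I_k\e$ is a compact interval (since $\phi \mapsto \lambda_{k,\phi}\e$ is continuous on the connected Brillouin torus), and the pointwise min-max principle gives $\lambda_{k,\phi}\e \leq \lambda_{k+1,\phi}\e$, so both $k \mapsto \min I_k\e$ and $k \mapsto \max I_k\e$ are non-decreasing.

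Next I would feed the preceding lemma into this band picture. Picking $\phi^* \in [0,2\pi)^n$ with $\Lambda_{\phi^*} = \Lambda$ yields $\lambda_{m+1,\phi^*}\e \geq \eps^{-2}\Lambda_{\phi^*} = \eps^{-2}\Lambda$, and hence $\max I_{m+1}\e \geq \eps^{-2}\Lambda$. Combined with the monotonicity of the endpoints, every band $I_k\e$ with $k \geq m+2$ then has $\min I_k\e \geq \min I_{m+1}\e$ and $\max I_k\e \geq \eps^{-2}\Lambda$. This is the step which is not purely formal and where I expect the reader might need to pause, but a short case distinction (on whether $\min I_k\e$ lies below or above $\eps^{-2}\Lambda$) gives the inclusion
\[
I_k\e \cap [0,\eps^{-2}\Lambda] \subseteq I_{m+1}\e \quad \text{for every } k \geq m+2.
\]

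Once this reduction is in place the conclusion is immediate. The inclusion above shows that
\[
\sigma(\HH\e) \cap [0,\eps^{-2}\Lambda] = \Big( \cupl_{k=1}^{m+1} I_k\e \Big) \cap [0,\eps^{-2}\Lambda],
\]
i.e.\ inside the window $[0,\eps^{-2}\Lambda]$ the spectrum coincides with the intersection of this window with a union of $m+1$ closed intervals, which leaves at most $m$ bounded open complementary components whose endpoints belong to $\sigma(\HH\e)$. Since $\sigma(\HH\e) = \sigma(\Hl\e)$, the same bound transfers to $\Hl\e$. Thus the only genuine obstacle is ruling out contributions from the tail bands $I_k\e$ with $k \geq m+2$, and this is handled entirely by the monotonicity of the band endpoints combined with the lower bound on $\lambda_{m+1,\phi^*}\e$ supplied by the preceding lemma.
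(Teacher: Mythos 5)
Your argument is correct and is essentially the paper's own: the Lemma evaluated at a maximizer $\phi^*$ of $\Lambda_\phi$ pushes the upper edge of the $(m+1)$st band above $\Lambda\eps^{-2}$, and the monotonicity of band edges in $k$ then confines all gaps meeting $[0,\Lambda\eps^{-2}]$ to the at most $m$ gaps between the first $m+1$ bands. You merely spell out the details that the paper compresses into ``from the above lemma and \eqref{repres1} we immediately obtain''.
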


Now we are able to proceed to the proof of our main result. First we sketch our strategy.

\subsection{Sketch of the proof}

We distinguish two points of the dual lattice cell, usually referred to as \emph{Brillouin zone}, denoting
$$
\phi_0=(0,0,\dots,0),\quad\phi_\pi=(\pi,\pi,\dots,\pi),
$$
In view of \eqref{repres1}-\eqref{enclosure} the left edge of the $k$th spectral band of $\HH\e$ is located between $\lambda_{k,N}\e$ and $\lambda_{k,\phi_0}$, while the right edge between $\lambda_{k,\phi_\pi}\e$ and $\lambda_{k,D}$. Clearly, $\lambda_{1,N}\e=\lambda_{1,\phi_0}\e=0$ holds. Our goal is to prove that
$$
\begin{array}{ll}
\lim_{\eps\to 0}\lambda_{k,N}\e=\lim_{\eps\to 0}\lambda_{k,\phi_0}\e=B_{k-1},&k=2,\dots,m+1,\\[2mm]
\lim_{\eps\to 0}\lambda_{k,D}\e=\lim_{\eps\to 0}\lambda_{k,\phi_\pi}\e=A_{k},&k=1,\dots,m,
\end{array}
$$
and moreover, that the rate of this convergence is of order $C\eps$. These results taken together constitute the claim of Theorem~\ref{th1}.

Let us start from the Neumann eigenvalues. The idea is to find a limit operator $\HH_N$ the eigenvalues of which will approach $\lambda_{k,N}\e$ as $\eps\to 0$. It is not difficult to guess -- using, e.g., Simon's results \cite{S78} about monotonic sequences of forms -- how the `limit' operator should looks like: it is associated with the form
\begin{gather}
\label{simon}
\begin{array}{l}
\dom(\hh_N)=\left\{u\in \cap_{\eps>0}\,\dom(\hh_N\e):\ \sup_{\eps>0}\,\hh_N\e[u,u]<\infty \right\},\\[2mm]  \hh_N[u,v]=\lim_{\eps\to 0}\,\hh_N\e[u,v].
\end{array}
\end{gather}
Evidently, $\dom(\hh_N)$ consists of functions being constant on each $\Omega_j$ and the value of the form on functions, with the abuse of notation written as $u=(u_0,\dots,u_m)\in\C^{m+1}$, is given by $\suml_{j=1}^m q_j |\Gamma_j| |u_j-u_0|^2$. Moreover, it turns out that the eigenvalues of $\HH_N$ are $0,\, B_1,\,\dots,B_m$, with the reference to a result obtained in \cite{BK15}.

The limit operator for $\HH\e_{\phi_0}$ is again $\HH_N$, since function being constant on $\Omega$ satisfy $\phi_0$-periodic boundary conditions, and consequently, \eqref{simon} leads to the same operator.

The limit operator for $\HH\e_{D}$ is associated with the form $\hh_D$ defined by \eqref{simon} except that $\hh_N\e$ is replaced by $\hh_D\e$. Since the only constant satisfying the Dirichlet boundary conditions is zero, we conclude that $\dom(\hh_D)=\C^m$ and the action of this form on $u=(u_1,\dots,u_m)\in\C^{m}$ is $\suml_{j=1}^m q_j |\Gamma_j| |u_j |^2$. The eigenvalues of $\HH_D$ are thus  $A_1,\,\dots,A_m$.

Finally, the limit operator for $\HH\e_{\phi_\pi}$ is  $\HH_D$, since functions being constant on $\Omega$ can satisfy $\phi_\pi$-periodic boundary conditions \emph{iff} that constant is zero.

In the subsequent sections we will implement this strategy. Our asymptotic analysis will be based on a (slighty modified) result from \cite{EP05}  which for the reader's convenience is presented in the Appendix.

\subsection{Asymptotic behavior of $\lambda_{k,N}\e$ and $\lambda_{k,\phi_0}\e$}

In the following we will work with the space $\C^{m+1}$ denoting its elements by bold letters, $\uu$, $\mathbf{v},\dots\,$. Their entries will be enumerated starting from zero,
$$
\uu\in \C^{m+1}\quad \Rightarrow\ \uu=(u_0,\dots,u_m)\text{ with }u_j\in\C.
$$
Let $\C^{m+1}_\Omega$ be the same space $\C^{m+1}$, but equipped with the weighted scalar product,
\begin{gather}
\label{scalar-product}
(\uu,\mathbf{v})_{\C^{m+1}_\Omega}=\suml_{j=0}^m u_j \overline{v_j} |\Omega_j|,
\end{gather}
In this space we introduce the sesquilinear form
$$
\hh_N:\: \hh_N[\uu,\mathbf{v}]=\suml_{j=1}^m q_j |\Gamma_j| (u_j-u_0)\overline{(v_j-v_0)}
$$
with $\dom(\hh_N)=\C^{m+1}_\Omega$. Let $\HH_N$ be the operator in $\C^{m+1}_\Omega$ associated with this form. It is obvious that $\HH_N$ can be represented by the $(n+1)\times(n+1)$ matrix, symmetric with respect to the scalar product \eqref{scalar-product}),
\begin{gather}\label{HHN}
\HH_N=\left(\begin{matrix}\suml_{j=1}^m q_j |\Gamma_j||\Omega_0|^{-1}&-q_1|\Gamma_1| |\Omega_0|^{-1}&-q_2|\Gamma_2||\Omega_0|^{-1}&\dots&-q_m |\Gamma_m| |\Omega_0|^{-1}\\-q_1|\Gamma_1| |\Omega_1|^{-1}&q_1|\Gamma_1| |\Omega_1|^{-1}&0&\dots&0\\
-q_2|\Gamma_2| |\Omega_2|^{-1}&0&q_2|\Gamma_2| |\Omega_2|^{-1}&\dots&0\\
\vdots&\vdots&\vdots&\ddots&\vdots\\
-q_m |\Gamma_m||\Omega_m|^{-1}&0&0&\dots&q_m|\Gamma_m| |\Omega_m|^{-1}
\end{matrix}\right).
\end{gather}
We denote by $\lambda_{1,N}\leq\lambda_{2,N}\leq\dots\leq \lambda_{m+1,N}$ the eigenvalues of $\HH_N$.

\begin{lemma}
	\label{lemmaN1}
For any $k\in\{1,\dots,m+1\}$ one has
$$
\lambda_{k,N}\e\leq  \lambda_{k,N}.
$$
\end{lemma}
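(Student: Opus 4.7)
The plan is a direct application of the min-max principle, using the fact that the limit form $\hh_N$ is realized on $\hh_N^\varepsilon$ by piecewise constant test functions, for which the $\varepsilon$-dependent gradient term vanishes identically.

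More precisely, I would define the linear embedding
$$
J:\C^{m+1}\to \dom(\hh_N^\varepsilon), \qquad (J\uu)(x)=u_j \text{ for } x\in\Omega_j,\ j\in\{0,1,\dots,m\}.
$$
Each $J\uu$ belongs to $\W(Y\setminus\cup_{j=1}^m \Gamma_j)$ because it is constant on each open piece of $Y\setminus\cup_j\Gamma_j$, and $J$ is obviously injective, so its image $V:=J(\C^{m+1})$ is an $(m+1)$-dimensional subspace of $\dom(\hh_N^\varepsilon)$. The key observation is that for $U=J\uu$ one has $\nabla U\equiv 0$ on $Y\setminus \cup_j\Gamma_j$, whence the first summand in \eqref{h-phi} vanishes (in particular, the $\varepsilon^{-2}$ prefactor is harmless); on the other hand the traces from the two sides of $\Gamma_j$ are $u_j$ and $u_0$ respectively, so
$$
\hh_N^\varepsilon[J\uu,J\uu]=\sum_{j=1}^m q_j|\Gamma_j|\,|u_j-u_0|^2=\hh_N[\uu,\uu].
$$
At the same time the $\LL(Y)$-norm of $J\uu$ coincides with the weighted norm on $\C^{m+1}_\Omega$:
$$
\|J\uu\|_{\LL(Y)}^2=\sum_{j=0}^m |u_j|^2|\Omega_j|=\|\uu\|_{\C^{m+1}_\Omega}^2.
$$

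With these identities, the min-max principle \cite[Sec.~XIII.1]{RS78} closes the argument. For any $k\in\{1,\dots,m+1\}$,
$$
\lambda_{k,N}^\varepsilon=\inf_{\substack{L\subset\dom(\hh_N^\varepsilon)\\ \dim L=k}}\ \sup_{u\in L\setminus\{0\}}\frac{\hh_N^\varepsilon[u,u]}{\|u\|_{\LL(Y)}^2}\leq \sup_{\uu\in L_0\setminus\{0\}}\frac{\hh_N^\varepsilon[J\uu,J\uu]}{\|J\uu\|_{\LL(Y)}^2}
$$
for any $k$-dimensional subspace $L_0\subset\C^{m+1}$, since $J(L_0)\subset V\subset \dom(\hh_N^\varepsilon)$ has dimension $k$. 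Substituting the identities above and taking the infimum over $L_0\subset\C^{m+1}$ yields exactly $\lambda_{k,N}$.

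There is no real obstacle here: the only point that needs a sentence of care is checking that the piecewise constant functions $J\uu$ genuinely lie in $\W(Y\setminus\cup_j\Gamma_j)$ and that their weak gradient vanishes on this set (so that the $\varepsilon^{-2}$-weighted Dirichlet term is identically zero, which is what makes the bound $\varepsilon$-uniform). All the rest is bookkeeping with the min-max principle and the weighted inner product \eqref{scalar-product}.
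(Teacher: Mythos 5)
Your proof is correct and follows essentially the same route as the paper: your embedding $J$ is exactly the paper's operator $P\uu=\sum_{j=0}^m u_j\chi_{\Omega_j}$, and you establish the same two identities (equality of norms and of form values on piecewise constant functions) before invoking the min-max principle. The only cosmetic difference is that the paper exhibits the optimal $k$-dimensional test subspace explicitly as the span of the first $k$ eigenvectors of $\HH_N$, while you take the infimum over all $k$-dimensional subspaces of $\C^{m+1}$ and appeal to the finite-dimensional min-max characterization of $\lambda_{k,N}$ — these are equivalent.
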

\begin{proof}
By the min-max principle we have
\begin{gather}\label{minmax}
\lambda_{k,N}\e=\min_{V\in \mathfrak{V}[k]}\max_{u\in V\setminus\{0\}}{\hh\e_N[u,u]	\over \|u\|^2_{\LL(Y)}},
\end{gather}
where $\mathfrak{V}[k]$  is the family of all $k$-dimensional subspaces in $\dom(\hh\e_N)$. We introduce the operator $P:\C_\Omega^{m+1}\to \LL(Y)$ by
$$
P\uu=\suml_{j=0}^{m}u_j\chi_{\Omega_j},
$$
where $\chi_{\Omega_j}$ is the indicator function of $\Omega_j$; since the $\Omega_j$'s are disjoint by assumption, we have
\begin{gather}
\label{unitar}
\|P\uu\|_{\LL(Y)}=\|\uu\|_{\C_\Omega^{m+1}},\quad \hh\e_N[P\uu,P\uu]=\hh_N[\uu,\uu].
\end{gather}
Let $\uu_{0,N},\dots,\uu_{m,N}$ be an orthonormal system of eigenvectors of $\HH_N$ such that $\HH_N u_{j,N}=\lambda_{j,N}u_{j,N}$. We denote $W_k:=\mathrm{span}(u_{0,N},\dots, u_{m,N})$, then it is easy to check that
\begin{gather}
\label{max}
\forall \uu\in W_k:\quad
{\hh_N[\uu,\uu]\over \|\uu\|^2_{\C_\Omega^{m+1}}}\leq \lambda_{k,N},
\end{gather}
the equality in \eqref{max} being attained   for $\uu=\uu_{k,N}$.

Finally, we set $V_k:=P W_k$. It is obvious that $V_k\in \mathfrak{V}[k]$ and using \eqref{minmax}-\eqref{max} we obtain
\begin{gather*}
\lambda_{k,N}\e\leq
\max_{u\in V_k\setminus\{0\}}{\hh\e_N[u,u]	\over \|u\|^2_{\LL(Y)}}=
\max_{\uu\in W_k\setminus\{0\}}{\hh_N[\uu,\uu]	\over \|\uu\|^2_{\C_\Omega^{m+1}}}=
\lambda_{k,N},
\end{gather*}
which concludes the proof.	
\end{proof}	

\begin{lemma}
	\label{lemmaN2}
For any $k\in\{1,\dots,m+1\}$ one has
\begin{gather}\label{estN2}
\lambda_{k,N}\leq \lambda_{k,N}\e + C\eps
\end{gather}
provided $\eps$ is small enough.	
\end{lemma}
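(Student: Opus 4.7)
The plan is to establish the reverse comparison using the min-max principle on $\HH_N$, constructing an admissible $k$-dimensional trial subspace out of the first $k$ eigenfunctions $u_1^\eps,\dots,u_k^\eps$ of $\HH_N^\eps$ via an averaging map. Concretely, define a transfer operator $J^\eps:\LL(Y)\to\C^{m+1}_\Omega$ by
$$
(J^\eps u)_j=\frac{1}{|\Omega_j|}\int_{\Omega_j} u\dd x,\qquad j\in\{0,1,\dots,m\},
$$
and use this $J^\eps$ as the morphism entering the abstract comparison lemma from \cite{EP05} that is recalled in the appendix. The goal is to show that on the span $V_k^\eps:=J^\eps\,\mathrm{span}(u_1^\eps,\dots,u_k^\eps)$ the Rayleigh quotient of $\hh_N$ does not exceed $\lambda_{k,N}^\eps+C\eps$.

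The two key technical estimates are a near-isometry bound and a form bound. For the former I would write $\|u\|_{\LL(\Omega_j)}^2=\|u-(J^\eps u)_j\|_{\LL(\Omega_j)}^2+|\Omega_j|\,|(J^\eps u)_j|^2$ and invoke the Poincar\'e inequality on each $\Omega_j$, giving
$$
\|u\|_{\LL(Y)}^2-\|J^\eps u\|_{\C^{m+1}_\Omega}^2\leq C\|\nabla u\|_{\LL(Y)}^2\leq C\eps^2\,\hh_N^\eps[u,u].
$$
For the latter, on each $\Gamma_j$ apply the triangle inequality
$$
|(J^\eps u)_0-(J^\eps u)_j|\leq |u\restr\gaj\ext-(J^\eps u)_0|+|u\restr\gaj\ext-u\restr\gaj\inn|+|u\restr\gaj\inn-(J^\eps u)_j|,
$$
square with the weighted Young's inequality using $\delta=\eps$, and integrate. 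The trace inequality combined with Poincar\'e on $\Omega_0$ and $\Omega_j$ controls the first and third summands by $C\|\nabla u\|_{\LL(\Omega_0\cup\Omega_j)}^2$, so multiplying by $q_j$, summing in $j$, and using $\|\nabla u\|^2\leq\eps^2\hh_N^\eps[u,u]$ yields
$$
\hh_N[J^\eps u,J^\eps u]\leq (1+C\eps)\hh_N^\eps[u,u].
$$

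Combining these, for every $u$ in the span of the first $k$ eigenfunctions, normalized by $\|u\|=1$, one has $\hh_N^\eps[u,u]\leq\lambda_{k,N}^\eps$, whence
$$
\frac{\hh_N[J^\eps u,J^\eps u]}{\|J^\eps u\|_{\C^{m+1}_\Omega}^2}\leq \frac{(1+C\eps)\lambda_{k,N}^\eps}{1-C\eps^2\lambda_{k,N}^\eps}\leq \lambda_{k,N}^\eps+C'\eps,
$$
where I use that $\lambda_{k,N}^\eps\leq\lambda_{k,N}$ is bounded by the previous lemma. A subsidiary step is to confirm that $J^\eps$ is injective on the eigenspace, so that $\dim V_k^\eps=k$; this follows from the near-isometry estimate for $\eps$ small enough (the Gram matrix of $\{J^\eps u_j^\eps\}$ is within $O(\eps^2)$ of the identity). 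The min-max principle then gives \eqref{estN2}.

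The main obstacle I foresee is purely bookkeeping: tracking which Poincar\'e and trace constants depend on the fixed geometry $\Omega_j$ versus which must be uniform in $\eps$, and verifying that the rescaling by $\eps^{-2}$ of the Dirichlet form indeed balances against the $\eps^{-1}$ that comes from the weighted Young inequality. This is exactly the quantitative input that the appendix lemma from \cite{EP05} is designed to package cleanly, so in the write-up I would simply verify its three abstract hypotheses for $(\HH_N^\eps,\HH_N,J^\eps)$ rather than carry the inequalities through by hand.
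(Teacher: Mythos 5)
Your proposal is correct and follows essentially the same route as the paper: the same averaging map $(\Phi u)_j=\frac{1}{|\Omega_j|}\int_{\Omega_j}u\dd x$, the same two estimates (a Poincar\'e-based near-isometry for the norms and a trace-plus-Poincar\'e bound for the forms), packaged through the comparison lemma of \cite{EP05} together with the a priori bound $\lambda_{k,N}^\eps\leq\lambda_{k,N}$ from the preceding lemma to make the correction term $O(\eps)$. The only cosmetic differences are that you state the form bound multiplicatively, $(1+C\eps)\hh_N^\eps[u,u]$, via a weighted Young inequality, whereas the paper writes it additively as $\hh_N^\eps[u,u]+C\eps\|u\|_{1,\eps}^2$ after factoring the remainder as a product of an $O(\eps)$ term and an $O(1)$ term.
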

\begin{proof}
For $u\in\dom(\hh_N\e)$	we introduce the norm
$$
\|u\|_{1,\eps}:=\left(\hh\e_N[u,u]+\|u\|^2_{\LL(Y)}\right)^{1/2}.
$$
Furthermore, we define the operator $\Phi:\dom(\hh_N\e)\to \dom(\hh_N)$ by
\begin{gather*}	
(\Phi u)_j={1\over |\Omega_j|}\int_{\Omega_j}u(x)\dd x,\quad j=0,\dots, m.
\end{gather*}	
Our goal is to prove that the following estimates hold for each $u\in\dom(\hh\e_N)$,
\begin{gather}
\label{estim1}
\|u\|_{\LL(Y)}^2\leq \|\Phi u\|_{\C_\Omega^{m+1}}^2 + C_1\eps^2\|u\|^2_{1,\eps},\\
\hh_N[\Phi u,\Phi u]\leq \hh_N\e[u,u] + C_2\eps\|u\|^2_{1,\eps}.\label{estim2}
\end{gather}
Then by means of Lemma~\ref{lemma-EP} from Appendix we will get
\begin{gather}\label{EP-estim}
\lambda_{k,N}\e\leq \lambda_{k,N} + {\lambda_{k,N}\e(1+\lambda_{k,N}\e)C_1\eps^2+(1+\lambda_{k,N}\e)C_2\eps \over 1-(1+\lambda_{k,N}\e)C_1\eps^2},
\end{gather}
and since $\lambda_{k,N}\e\leq \lambda_{k,N}$ holds by Lemma~\ref{lemmaN1}, the sought estimate \eqref{estN2} would follow from \eqref{EP-estim}.

Estimate \eqref{estim1} is an easy consequence of the Poincar\'e inequality
\begin{gather*}
\label{poincare}
\forall j\in\{0,\dots,m\}:\quad\|u-(\Phi u)_j\|_{\LL(\Omega_j)}\leq
C\|\nabla u\|_{\LL(\Omega_j)}.
\end{gather*}
Indeed, we have
\begin{multline*}
\|u\|_{\LL(Y)}^2=\suml_{j=0}^m\|u\|_{\LL(\Omega_j)}^2=
 \|\Phi u\|^2_{\C_\Omega^{m+1}}+ \suml_{j=0}^m\|u-(\Phi u)_j\|_{\LL(\Omega_j)}^2\\ \leq
  \|\Phi u\|^2_{\C_\Omega^{m+1}}+ C_1\suml_{j=0}^m\|\nabla u\|^2_{\LL(\Omega_j)}\leq
  \|\Phi u\|^2_{\C_\Omega^{m+1}}+ C_1\eps^2\|u\|^2_{1,\eps}.
\end{multline*}
Let us next prove \eqref{estim2}. One has
\begin{gather*}
\hh_N[\Phi u,\Phi u]\leq \hh_N\e[u,u] + \suml_{j=1}^m q_j R_j[u,u],
\end{gather*}
where
$$
R_j[u,u]:=\|(\Phi u)_0-(\Phi u)_j\|^2_{\LL(\Gamma_j)}-
\|u\restr\gaj\ext-u\restr\gaj\inn\|^2_{\LL(\Gamma_j)},
$$
and these expressions can be estimated in the following way,
\begin{multline*}
|R_j[u,u]|\leq \left|\left\|(\Phi u)_0-(\Phi u)_j\right\|_{\LL(\Gamma_j)}-\left\|  u\restr\gaj\ext-u\restr\gaj\inn\right\|_{\LL(\Gamma_j)}\right|\\\times
\left(\left\|(\Phi u)_0-(\Phi u)_j\right\|_{\LL(\Gamma_j)}+\left\|  u\restr\gaj\ext-u\restr\gaj\inn\right\|_{\LL(\Gamma_j)}\right)\\\leq
\left(\left\|(\Phi u)_0  - u\restr\gaj\ext \right\|_{\LL(\Gamma_j)}+\left\| (\Phi u)_j -  u\restr\gaj\inn\right\|_{\LL(\Gamma_j)}\right)\\\times
\left(\|(\Phi u)_0\|_{\LL(\Gamma_j)}+\|(\Phi u)_j\|_{\LL(\Gamma_j)} + \|u\restr\gaj\ext\|_{\LL(\Gamma_j)}+\|u\restr\gaj\inn \|_{\LL(\Gamma_j)}\right).
\end{multline*}
Using the trace and the Poincar\'e inequalities we get
\begin{multline}\label{p1}
j\in\{1,\dots,m\}:\quad \left\|(\Phi u)_j  - u\restr\gaj\inn \right\|_{\LL(\Gamma_j)} \leq
C\sqrt{\left\|(\Phi u)_j  - u \right\|^2_{\LL(\Omega_j)}+
\left\|\nabla u \right\|_{\LL(\Omega_j)}^2}\\\leq
C_1 \left\|\nabla u \right\|_{\LL(\Omega_j)} \leq C_1\eps \|u\|_{1,\eps},
\end{multline}
and similarly,
\begin{gather}\label{p2}
\left\|(\Phi u)_0  - u\restr\gaj\ext \right\|_{\LL(\Gamma_j)} \leq
C \left\|\nabla u \right\|_{\LL(\Omega_0)} \leq C\eps \|u\| _{1,\eps}.
\end{gather}
Using further the trace and Cauchy-Schwarz inequalities one finds
\begin{multline}\label{p3}
\|(\Phi u)_0\|_{\LL(\Gamma_j)}+\|(\Phi u)_j\|_{\LL(\Gamma_j)} + \|u\restr\gaj\ext\|_{\LL(\Gamma_j)}+\|u\restr\gaj\inn \|_{\LL(\Gamma_j)}\\\leq
C\|u\|_{\W(Y\setminus\cup_{j=1}^m\Gamma_j)}\leq C\|u\|_{1,\eps}.
\end{multline}
Combining now \eqref{p1}-\eqref{p3} we obtain the needed estimate,
\begin{gather*}
|R_j[u,u]|\leq C\eps \|u\|_{1,\eps},
\end{gather*}
which implies the validity of \eqref{estim2} concluding thus the proof.
\end{proof}	

Finally, we notice that the matrix of the form \eqref{HHN} was investigated already in \cite{BK15} (using  different notations). It was demonstrated there that its eigenvalues are the roots of the function $\lambda F(\lambda)$, where $F(\lambda)$ is defined by \eqref{mu_eq}. Taking this into account we immediately obtain the following corollary from the last two lemmata.

\begin{corollary}\label{corN}
One has
\begin{gather*}
\lambda\e_{1,N}=0,\quad \lambda\e_{k,N}\leq B_{k-1}\text{\quad for }\;k\in\{2,\dots,m+1\}.
\end{gather*}
Moreover, for small enough $\eps$ there is also a lower bound,
\begin{gather*}
B_{k-1}-C\eps\leq \lambda\e_{k,N} \text{\quad for }\;k\in\{2,\dots,m+1\}.
\end{gather*}
\end{corollary}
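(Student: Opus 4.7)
The plan is to combine the two-sided sandwich provided by Lemmas \ref{lemmaN1} and \ref{lemmaN2} with the explicit characterisation of the eigenvalues of the limit matrix $\HH_N$ quoted from \cite{BK15}. Concretely, I would first identify the $m+1$ eigenvalues of $\HH_N$ as $0, B_1, \dots, B_m$, and then transfer this information back to $\lambda^\eps_{k,N}$ via the two inequalities.

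First I would argue that $\lambda^\eps_{1,N}=0$ directly. The constant function $u\equiv 1$ lies in $\dom(\hh^\eps_N)=\W(Y\setminus \cup_{j=1}^m \Gamma_j)$, its gradient vanishes identically, and its one-sided traces on each $\Gamma_j$ coincide, so that $\hh^\eps_N[1,1]=0$; non-negativity of the form then forces the smallest eigenvalue to be zero.

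Next I would identify $\lambda_{k,N}$ for the limit operator. The vector $(1,1,\dots,1)\in\C^{m+1}_\Omega$ is in the kernel of $\HH_N$ by inspection of \eqref{HHN}, so $\lambda_{1,N}=0$. For the remaining $m$ eigenvalues I would invoke the calculation from \cite{BK15} already quoted just above the corollary: the eigenvalues of $\HH_N$ are exactly the roots of $\lambda F(\lambda)$, with $F$ defined by \eqref{mu_eq}. The factor $\lambda$ accounts for the zero eigenvalue, while the nontrivial roots are the numbers $B_1<B_2<\dots<B_m$ introduced via the interlacing relations \eqref{inter}. Since each $A_j>0$ and $B_j>A_j$, the $B_j$'s are strictly positive, so the ascending ordering of the eigenvalues of $\HH_N$ reads $0=\lambda_{1,N}<B_1=\lambda_{2,N}<\dots<B_m=\lambda_{m+1,N}$, which gives $\lambda_{k,N}=B_{k-1}$ for $k\in\{2,\dots,m+1\}$.

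Finally, I would assemble the corollary. For $k\in\{2,\dots,m+1\}$, Lemma~\ref{lemmaN1} combined with the identification $\lambda_{k,N}=B_{k-1}$ yields the upper estimate $\lambda^\eps_{k,N}\leq B_{k-1}$, and Lemma~\ref{lemmaN2} yields, for $\eps$ small enough, the matching lower estimate $B_{k-1}-C\eps\leq \lambda^\eps_{k,N}$. There is no genuine obstacle here: the only thing to check carefully is that the indexing matches, i.e.\ that the $k$th eigenvalue of $\HH_N$ in ascending order is indeed $B_{k-1}$; this is immediate once the positivity and ordering of the $B_j$'s are observed.
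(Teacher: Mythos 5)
Your proposal is correct and follows essentially the same route as the paper: the authors likewise invoke the result of \cite{BK15} that the eigenvalues of the matrix \eqref{HHN} are the roots of $\lambda F(\lambda)$, i.e.\ $0,B_1,\dots,B_m$, and then combine this identification with Lemmata~\ref{lemmaN1} and \ref{lemmaN2}. Your explicit check that $\lambda\e_{1,N}=0$ via the constant function and your remark on the ordering of the $B_j$'s are fine additions but do not change the argument.
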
	

As we have already noticed above, the limit operator in the $\phi_0$-periodic situation has the same eigenvalues as the Neumann one. We have the following claim the proof of which repeats \emph{verbatim} the argument of Lemmata~\ref{lemmaN1} and \ref{lemmaN2}.

\begin{lemma}\label{lemmaPer}
One has
\begin{gather*}
\lambda\e_{1,\phi_0}=0,\quad \lambda\e_{k,\phi_0}\leq B_{k-1}\text{\quad for }\;k\in\{2,\dots,m+1\}.
\end{gather*}
Moreover, for small enough $\eps$ there is also a lower bound,	
\begin{gather*}
B_{k-1}-C\eps\leq \lambda\e_{k,\phi_0} \text{\quad for }\;k\in\{2,\dots,m+1\}.
\end{gather*}
\end{lemma}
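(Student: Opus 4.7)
The plan is to mimic the proofs of Lemmata~\ref{lemmaN1} and \ref{lemmaN2} with the only modification being a check that the same test and averaging operators respect the $\phi_0$-periodic boundary conditions on $\partial Y$. Because the limit operator in the $\phi_0$-periodic case coincides with $\HH_N$, its eigenvalues are again $0,B_1,\dots,B_m$ by the same reference to \cite{BK15}, and the target inequalities translate into $\lambda^\eps_{k,\phi_0}\le \lambda_{k,N}$ and $\lambda_{k,N}\le \lambda^\eps_{k,\phi_0}+C\eps$.

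For the upper bound I would reuse the operator $P:\C^{m+1}_\Omega\to \LL(Y)$, $P\uu=\sum_{j=0}^m u_j\chi_{\Omega_j}$, and the min-max characterization \eqref{minmax} with $\hh^\eps_N$ replaced by $\hh^\eps_{\phi_0}$ and $\mathfrak{V}[k]$ taken inside $\dom(\hh^\eps_{\phi_0})=\W_{\phi_0}(Y\setminus\cup_{j=1}^m\Gamma_j)$. The only new point to verify is that $P\uu\in\dom(\hh^\eps_{\phi_0})$ for every $\uu\in\C^{m+1}_\Omega$, i.e.\ that $P\uu$ satisfies \eqref{quasi} with $\phi=\phi_0=(0,\dots,0)$. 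This is immediate from the standing assumption $\overline{\cup_{j=1}^m\Omega_j}\subset Y$: in a neighbourhood of $\partial Y$ the function $P\uu$ equals the single constant $u_0$, and hence fulfils ordinary periodicity. The identities \eqref{unitar} remain valid, and the same chain of inequalities as in the proof of Lemma~\ref{lemmaN1} yields $\lambda^\eps_{k,\phi_0}\le \lambda_{k,N}$ for every $k\in\{1,\dots,m+1\}$.

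For the lower bound I would apply Lemma~\ref{lemma-EP} from the Appendix to the same averaging map $\Phi:\dom(\hh^\eps_{\phi_0})\to \dom(\hh_N)$, $(\Phi u)_j=|\Omega_j|^{-1}\int_{\Omega_j}u\dd x$. The two key estimates \eqref{estim1} and \eqref{estim2} are purely local over the cells $\Omega_0,\dots,\Omega_m$ and rest exclusively on the Poincar\'e inequality on each $\Omega_j$ together with the trace inequality on $\Gamma_j$; neither of these depends on the boundary conditions imposed on $\partial Y$. Therefore both estimates remain valid verbatim with $\hh^\eps_N$ replaced by $\hh^\eps_{\phi_0}$ and $\|u\|_{1,\eps}$ defined through $\hh^\eps_{\phi_0}$. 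Combining them via Lemma~\ref{lemma-EP} and using the already established bound $\lambda^\eps_{k,\phi_0}\le\lambda_{k,N}$ to absorb the $\eps$-dependent denominator exactly as in \eqref{EP-estim}, one obtains $\lambda_{k,N}\le \lambda^\eps_{k,\phi_0}+C\eps$ for all sufficiently small $\eps$.

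I do not anticipate a genuine obstacle: the argument is essentially a bookkeeping exercise, and the single non-trivial point is the remark that the strict inclusion $\overline{\cup_j \Omega_j}\subset Y$ makes $P\uu$ trivially compatible with $\phi_0$-periodicity, so the Neumann test space can be used without modification. Finally $\lambda^\eps_{1,\phi_0}=0$ is immediate since the constant function $u\equiv 1$ lies in $\dom(\hh^\eps_{\phi_0})$ and gives $\hh^\eps_{\phi_0}[1,1]=0$, and the remaining inequalities of the lemma follow by identifying $\lambda_{k,N}$ with $B_{k-1}$ for $k\ge 2$ as in Corollary~\ref{corN}.
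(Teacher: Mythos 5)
Your proposal is correct and follows exactly the route the paper intends: the paper states that the proof of this lemma ``repeats verbatim'' the arguments of Lemmata~\ref{lemmaN1} and \ref{lemmaN2}, and your only added observation --- that $P\uu$ is constant near $\partial Y$ because $\overline{\cup_j\Omega_j}\subset Y$, hence automatically $\phi_0$-periodic, and that the Poincar\'e and trace estimates are local and insensitive to the boundary condition on $\partial Y$ --- is precisely the justification the authors leave implicit. Nothing is missing.
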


\subsection{Asymptotic behavior of $\lambda_{k,D}\e$ and $\lambda_{k,\phi_\pi}\e$}

Keeping the boldface symbols from the previous section, we denote by $\C^{m}_\Omega$ the space of vectors $\uu=(0,u_1,\dots,u_m)\in \C^{m+1}$ equipped with the scalar product
\begin{gather*}
(\uu,\mathbf{v})_{\C^{m}_\Omega}=\suml_{j=1}^m u_j \overline{v_j} |\Omega_j|,
\end{gather*}
and introduce in this space the sesquilinear form $\hh_D$,
$$
\hh_D[\uu,\mathbf{v}]:=\suml_{j=1}^m q_j |\Gamma_j| u_j\overline{v_j}
$$
with $\dom(\hh_D)=\C^{m }_\Omega$. Let further $\HH_D$ be the operator in $\C^{m }_\Omega$ associated with this form. It is clear that $\HH_D$ acts as
\begin{gather*}
\HH_D \uu=\sum_{j=1}^m q_1 |\Gamma_1||\Omega_1|^{-1} u_j
\end{gather*}
and its eigenvalues are $A_1,\ A_2,\dots,\, A_m$.

\begin{lemma}\label{lemmaD}
One has
\begin{gather*}
\lambda\e_{k,D}\leq A_k\text{\quad for }\;k\in\{1,\dots,m \}.
\end{gather*}
Moreover, for small enough $\eps$ there is a lower bound,
\begin{gather*}
A_{k}-C\eps\leq \lambda\e_{k,D} \text{\quad for }\;k\in\{1,\dots,m\}.
\end{gather*}
\end{lemma}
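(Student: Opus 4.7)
The plan is to mimic the two-step pattern used for the Neumann and $\phi_0$-periodic cases in Lemmata~\ref{lemmaN1} and \ref{lemmaN2}, modified to accommodate the Dirichlet boundary condition on $\partial Y$ which forces the $\Omega_0$-component of the limit vector to vanish. The upper bound follows by min-max with an explicit embedding, and the lower bound by combining the abstract stability estimate of Lemma~\ref{lemma-EP} with a mean-value projection $\Phi$ onto the limit space $\C^m_\Omega$.

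For the upper bound $\lambda_{k,D}^\eps\le A_k$, I would define $P\colon \C^m_\Omega\to \LL(Y)$ by $P\uu:=\sum_{j=1}^m u_j\chi_{\Omega_j}$. Since $\overline{\Omega_j}\subset Y$ for every $j$, $P\uu$ vanishes in a neighbourhood of $\partial Y$ and hence lies in $\dom(\hh_D^\eps)$. By disjointness of the $\Omega_j$'s one has $\|P\uu\|_{\LL(Y)}=\|\uu\|_{\C^m_\Omega}$; moreover $\nabla(P\uu)\equiv 0$ in the distributional sense on $Y\setminus\cup_j\Gamma_j$, and the jump of $P\uu$ across $\Gamma_j$ equals $-u_j$, so $\hh_D^\eps[P\uu,P\uu]=\sum_{j=1}^m q_j|\Gamma_j||u_j|^2=\hh_D[\uu,\uu]$. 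Taking $W_k$ spanned by the first $k$ eigenvectors of $\HH_D$ and $V_k:=PW_k$, the argument of Lemma~\ref{lemmaN1} applies verbatim and yields $\lambda_{k,D}^\eps\le A_k$.

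For the lower bound I would define $\Phi\colon\dom(\hh_D^\eps)\to\dom(\hh_D)$ by $(\Phi u)_j:=|\Omega_j|^{-1}\int_{\Omega_j}u\dd x$ for $j=1,\dots,m$, regarded as the element $(0,(\Phi u)_1,\dots,(\Phi u)_m)\in \C^m_\Omega$. The goal is to verify the two bounds $\|u\|_{\LL(Y)}^2\le \|\Phi u\|_{\C^m_\Omega}^2+C_1\eps^2\|u\|^2_{1,\eps}$ and $\hh_D[\Phi u,\Phi u]\le \hh_D^\eps[u,u]+C_2\eps\|u\|^2_{1,\eps}$, which together with the already-proven upper bound feed directly into Lemma~\ref{lemma-EP} and give $A_k-C\eps\le\lambda_{k,D}^\eps$. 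Splitting $\|u\|_{\LL(Y)}^2=\|u\|_{\LL(\Omega_0)}^2+\sum_{j=1}^m\|u\|_{\LL(\Omega_j)}^2$ and applying the usual Poincaré inequality on each $\Omega_j$, $j\ge 1$, exactly as in \eqref{estim1}, the first estimate reduces to controlling $\|u\|_{\LL(\Omega_0)}^2$, for which one uses the Poincaré--Friedrichs inequality $\|u\|_{\LL(\Omega_0)}\le C\|\nabla u\|_{\LL(\Omega_0)}$, valid because $u$ vanishes on $\partial Y\cap\partial\Omega_0$, a set of positive $(n-1)$-measure. For the second estimate, the elementary identity $|a^2-b^2|\le|a-b|(|a|+|b|)$ used as in \eqref{p1}--\eqref{p3} reduces the task to bounding $\|(\Phi u)_j-(u\restr\gaj\ext-u\restr\gaj\inn)\|_{\LL(\Gamma_j)}\le \|(\Phi u)_j-u\restr\gaj\inn\|_{\LL(\Gamma_j)}+\|u\restr\gaj\ext\|_{\LL(\Gamma_j)}$. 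The first summand is $\mathcal{O}(\eps\|u\|_{1,\eps})$ by trace and Poincaré on $\Omega_j$, exactly as in \eqref{p1}; the second, new summand is handled by applying the trace inequality on $\Omega_0$ and then Poincaré--Friedrichs, yielding $\|u\restr\gaj\ext\|_{\LL(\Gamma_j)}\le C\|u\|_{\W(\Omega_0)}\le C\|\nabla u\|_{\LL(\Omega_0)}\le C\eps\|u\|_{1,\eps}$.

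The main technical point, and the only genuinely new ingredient compared to the Neumann analysis, is the systematic use of the Poincaré--Friedrichs inequality on $\Omega_0$: it replaces the mean-subtracted Poincaré inequality used in the Neumann setting and is precisely the mechanism by which the Dirichlet condition on $\partial Y$ forces the $\Omega_0$-component of $\Phi u$ to be effectively zero in the limit, thereby collapsing $\C^{m+1}_\Omega$ to $\C^m_\Omega$ and shifting the spectrum from $\{0,B_1,\dots,B_m\}$ to $\{A_1,\dots,A_m\}$. Once this observation is in place, the remaining bookkeeping in the estimate for $R_j$ and the subsequent application of Lemma~\ref{lemma-EP} are entirely routine, repeating the argument of Lemma~\ref{lemmaN2} with the obvious modifications.
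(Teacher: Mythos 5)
Your proposal is correct and follows essentially the same route as the paper, which proves Lemma~\ref{lemmaD} by repeating the arguments of Lemmata~\ref{lemmaN1} and~\ref{lemmaN2} with the single essential modification you identify: replacing the mean-subtracted Poincar\'e inequality on $\Omega_0$ by the Friedrichs inequality $\|u\|_{\LL(\Omega_0)}\leq C\|\nabla u\|_{\LL(\Omega_0)}$, valid since functions in $\dom(\hh_D\e)$ vanish on $\partial Y$. Your write-up merely makes explicit the bookkeeping (the embedding $P$, the projection $\Phi$ with vanishing zeroth component, and the estimate of the exterior trace via Friedrichs) that the paper leaves to the reader.
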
	
The proof of this lemma is again similar to the proof of Lemmata~\ref{lemmaN1} and \ref{lemmaN2}. The only essential difference here is that instead of the Poincar\'e inequality in $\Omega_0$ we use the Friedrichs inequality,
$$
\|u\|_{\LL(\Omega_0)}\leq \|\nabla u\|_{\LL(\Omega_0)},
$$
which is valid because functions from $\dom(\hh_D\e)$ have zero trace on $\partial Y$.

The analogous result is valid for eigenvalues in the $\phi_\pi$-periodic situation.

\begin{lemma}\label{lemmaAntiper}
One has
\begin{gather*}
\lambda\e_{k,\phi_\pi}\leq A_k\text{\quad for }\;k\in\{1,\dots,m \}.
\end{gather*}
Moreover, for small enough $\eps$ there is a gain a lower bound
\begin{gather*}
A_{k}-C\eps\leq \lambda\e_{k,\phi_\pi} \text{\quad for }\;k\in\{1,\dots,m\}.
\end{gather*}
\end{lemma}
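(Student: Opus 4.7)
The plan is to mirror, step by step, the proofs of Lemmata~\ref{lemmaN1}, \ref{lemmaN2}, and \ref{lemmaD}, with the limit operator $\HH_D$ on $\C^m_\Omega$ (whose eigenvalues are $A_1,\dots,A_m$) playing the same role as before. The key observation enabling the identification in the $\phi_\pi$-setting is that for any $\uu\in\C^m_\Omega$ the piecewise constant function $P\uu:=\suml_{j=1}^m u_j\chi_{\Omega_j}$ is supported strictly inside $Y$, since $\overline{\Omega_j}\subset Y$; its trace on $\partial Y$ is therefore zero and the quasiperiodic condition \eqref{quasi} with $\phi=\phi_\pi$ is trivially fulfilled, so $P\uu$ indeed lies in $\dom(\hh\e_{\phi_\pi})$.

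For the upper bound $\lambda\e_{k,\phi_\pi}\leq A_k$ I would follow Lemma~\ref{lemmaN1} verbatim: the identities $\|P\uu\|_{\LL(Y)}=\|\uu\|_{\C^m_\Omega}$ and $\hh\e_{\phi_\pi}[P\uu,P\uu]=\hh_D[\uu,\uu]$ follow by direct computation, the gradient term vanishing and each jump across $\Gamma_j$ reducing to $u_j-0=u_j$ thanks to the choice of $P$. Inserting the trial subspace $V_k=PW_k$, where $W_k$ is spanned by the first $k$ eigenvectors of $\HH_D$, into the min-max formula for $\lambda\e_{k,\phi_\pi}$ then delivers the bound. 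For the lower bound I would apply Lemma~\ref{lemma-EP} with the averaging map $\Phi:\dom(\hh\e_{\phi_\pi})\to\C^m_\Omega$ given by $(\Phi u)_0:=0$ and $(\Phi u)_j:=|\Omega_j|^{-1}\int_{\Omega_j}u\,\dd x$ for $j\in\m$. Since $(\Phi u)_0=0$, one rewrites $\hh_D[\Phi u,\Phi u]=\suml_{j=1}^m q_j\|(\Phi u)_0-(\Phi u)_j\|^2_{\LL(\Gamma_j)}$, after which the two estimates
\begin{gather*}
\|u\|^2_{\LL(Y)}\leq\|\Phi u\|^2_{\C^m_\Omega}+C_1\eps^2\|u\|^2_{1,\eps},\qquad \hh_D[\Phi u,\Phi u]\leq\hh\e_{\phi_\pi}[u,u]+C_2\eps\|u\|^2_{1,\eps}
\end{gather*}
are obtained by the Poincar\'e and trace manipulations \eqref{p1}--\eqref{p3} on each $\Omega_j$ with $j\geq 1$, exactly as in Lemma~\ref{lemmaN2}.

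The single new ingredient, and the step I expect to be the main obstacle, is the analogue of the Friedrichs inequality used in Lemma~\ref{lemmaD} to control the $\Omega_0$-contributions: one needs
\begin{gather*}
\|u\|_{\LL(\Omega_0)}\leq C\|\nabla u\|_{\LL(\Omega_0)}\qquad\text{for all }u\in\W(\Omega_0)\text{ satisfying \eqref{quasi} with }\phi=\phi_\pi.
\end{gather*}
Because any constant obeying $\phi_\pi$-quasiperiodic boundary conditions on $\partial Y$ must vanish, zero is \emph{not} an eigenvalue of $-\Delta_{N,\phi_\pi}(\Omega_0)$, so $\Lambda_{\phi_\pi}>0$ and the above inequality follows from its min-max characterization with an $\eps$-independent constant. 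With this at hand the two estimates above combined with Lemma~\ref{lemma-EP} and the already-established upper bound $\lambda\e_{k,\phi_\pi}\leq A_k$ yield the sought lower bound $A_k-C\eps\leq\lambda\e_{k,\phi_\pi}$.
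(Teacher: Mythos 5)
Your proposal is correct and follows exactly the route the paper intends: the paper leaves this lemma to the reader as ``analogous'' to Lemmata~\ref{lemmaN1}, \ref{lemmaN2} and \ref{lemmaD}, and you reproduce that analogy faithfully, including the one genuinely new ingredient --- the Poincar\'e-type inequality on $\Omega_0$ for $\phi_\pi$-quasiperiodic functions, correctly justified by the observation that $\Lambda_{\phi_\pi}>0$ since no nonzero constant satisfies \eqref{quasi} with $\phi=\phi_\pi$. This plays precisely the role the Friedrichs inequality plays in the Dirichlet case, so nothing is missing.
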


This brings us to the conclusion. Combining Corollary~\ref{corN}, Lemmata~\ref{lemmaPer}--\ref{lemmaAntiper}, and equations \eqref{repres1}--\eqref{enclosure} we arrive at the claim of Theorem~\ref{th1}.

\section*{Acknowledgment}

The research was supported by Project No. 17-01706S of the Czech Science Foundation (GA\v{C}R) and by the Czech-Austrian grant 7AMBL7ATO22. A.K. is supported by the Austrian Science Fund (FWF) under Project No. M~2310-N32.

\section*{Appendix}

Here we recall a result from \cite{EP05}, which is a simple consequence of the min-max principle and serves to compare eigenvalues of two operators acting in different Hilbert spaces.

Let $H$ and $H'$ be two separable Hilbert spaces with the norms $\|\cdot\|$ and $\|\cdot\|'$. Let $\Hl$ and $\Hl'$ be non-negative self-adjoint operators in these spaces with purely discrete spectra, and $\h$ and $\h'$ the corresponding forms, respectively. We denote by $\{\lambda_k\}_{k\in\N}$ and
$\{\lambda_k'\}_{k\in\N}$ the corresponding sequences of eigenvalues, numbered in the ascending order and with account of their multiplicity. Finally, we set $\|u\|^2_{n}:=\|u\|^2+\|\Hl^{n/2} u\|$.

\begin{lemma}\label{lemma-EP} \cite{EP05}
Suppose that $\Phi:\dom(\h)\to\dom(\h')$ is a linear map such that for all $u\in \dom (\Hl^{\max\{n_1,n_2\}/2})$ one has
\begin{gather*}
\|u\|^2\leq \|\Phi u\|'^2 + \delta_1\|u\|_{n_1}^2,\\
\h'[\Phi u,\Phi u]\leq \h[u,u] + \delta_2\|u\|_{n_2}^2.
\end{gather*}
with some constants $n_1,n_2 \geq 0$ and $\delta_1,\delta_2\geq 0$. Then for each $k\in\N$ we have
\begin{gather}
\label{est-EP}
\lambda_k'\leq \lambda_k + {\lambda_k(1+\lambda_k^{n_1})\delta_1+(1+\lambda_k^{n_2})\delta_2\over 1-(1+\lambda_k^{n_1})\delta_1}
\end{gather}
 provided the denominator $1-(1+\lambda_k^{n_1})\delta_1$ is positive.
\end{lemma}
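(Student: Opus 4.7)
The plan is to invoke the classical min-max principle for $\Hl'$, using as a $k$-dimensional trial subspace the image under $\Phi$ of the span of the first $k$ eigenvectors of $\Hl$. Concretely, let $u_1,\dots,u_k\in\dom(\Hl)$ be an orthonormal system with $\Hl u_j=\lambda_j u_j$, write $L_k=\mathrm{span}(u_1,\dots,u_k)\subset H$, and consider $\Phi L_k\subset\dom(\h')$.

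The first task is to verify that $\Phi|_{L_k}$ is injective, so that $\dim \Phi L_k=k$. For an arbitrary $u=\sum_{j=1}^k c_j u_j\in L_k$ we have $\|u\|^2=\sum_j|c_j|^2$ and, by the spectral decomposition,
$$
\|u\|_{n_1}^2=\|u\|^2+\|\Hl^{n_1/2}u\|^2=\sum_{j=1}^k |c_j|^2\bigl(1+\lambda_j^{n_1}\bigr)\leq \bigl(1+\lambda_k^{n_1}\bigr)\|u\|^2.
$$
The first hypothesis then gives the lower bound
$$
\|\Phi u\|'^2\geq \bigl(1-\delta_1(1+\lambda_k^{n_1})\bigr)\|u\|^2,
$$
which is strictly positive for $u\neq 0$ under the assumed positivity of the denominator in \eqref{est-EP}. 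In particular, $\Phi u=0$ forces $u=0$.

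Next I would estimate the Rayleigh quotient of $\h'$ on $\Phi L_k$. For $u\in L_k$ one has $\h[u,u]=\sum_j|c_j|^2\lambda_j\leq \lambda_k\|u\|^2$ and, exactly as above, $\|u\|_{n_2}^2\leq(1+\lambda_k^{n_2})\|u\|^2$. The second hypothesis therefore yields
$$
\h'[\Phi u,\Phi u]\leq \lambda_k\|u\|^2+\delta_2\bigl(1+\lambda_k^{n_2}\bigr)\|u\|^2.
$$
Dividing by the lower bound for $\|\Phi u\|'^2$ and applying the min-max principle in the form $\lambda_k'\leq \max_{v\in \Phi L_k\setminus\{0\}}\h'[v,v]/\|v\|'^2$, I obtain
$$
\lambda_k'\leq \frac{\lambda_k+\delta_2(1+\lambda_k^{n_2})}{1-\delta_1(1+\lambda_k^{n_1})}.
$$
Subtracting $\lambda_k$ and collecting terms on a common denominator gives the claimed bound \eqref{est-EP}.

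The only subtle point is that the two hypotheses involve the graph-type norms $\|\cdot\|_{n_1}$ and $\|\cdot\|_{n_2}$ with possibly different orders, but this poses no real obstacle: on $L_k$ both quantities are controlled by $\|u\|^2$ through elementary spectral estimates, which reduces the entire argument to an algebraic manipulation of the inequality on the Rayleigh quotient. No completeness or density properties beyond what is encoded in the min-max principle are needed.
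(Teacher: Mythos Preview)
Your argument is correct and is exactly the min-max computation the paper alludes to: the lemma is stated in the Appendix with a reference to \cite{EP05} and the remark that it is ``a simple consequence of the min-max principle,'' without a proof given in the paper itself. Your trial space $\Phi L_k$, the spectral bounds $\|u\|_{n_i}^2\le(1+\lambda_k^{n_i})\|u\|^2$ on $L_k$, and the resulting Rayleigh-quotient estimate reproduce precisely the intended derivation.
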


\begin{remark}
{\rm The above result was established in \cite{EP05} under the assumption that $\dim H=\dim H'=\infty$, however, it is easy to see from its proof that the result remains valid for $\dim H<\infty$ as well. In that case \eqref{est-EP} holds for $k\in\{1,\dots,\,\dim H\}$. This is the situation in the proof of Lemma~\ref{lemmaN2}, where we apply Lemma~\ref{lemma-EP} to $H=\C_\Omega^{n+1}$.}
\end{remark}


\EndPaper



\begin{thebibliography}{10}

\bibitem{AGHH}
S.~Albeverio, F.~Gesztesy, R.~H\o egh-Krohn and H.~Holden, \emph{Solvable Models in Quantum Mechanics}, 2nd edition,  AMS Chelsea Publishing, Providence, R.I., 2005.

\bibitem{BK15}
D.~Barseghyan and A.~Khrabustovskyi, \emph{Gaps in the spectrum of a periodic quantum graph with periodically distributed $\delta'$-type interactions}, J. Phys. A: Math. Theor. \textbf{48} (2015), No. 25, 255201.

\bibitem{BEL14}
J.~Behrndt, P.~Exner and V.~Lotoreichik,
\emph{Schr\"odinger operators with $\delta$- and $\delta'$-interactions on Lipschitz surfaces and chromatic numbers of associated partitions}, {Rev. Math. Phys.} \textbf{26} (2014), No. 8, 1450015.

\bibitem{BLL13}
J.~Behrndt, M.~Langer and V.~Lotoreichik, \emph{Schr\"odinger operators with $\delta$ and $\delta'$-potentials supported on hypersurfaces}, {Ann. H.~Poincar\'{e}}  \textbf{14} (2013), No. 2, 385--423.

\bibitem{BK}
G. Berkolaiko and P. Kuchment, \emph{Introduction to Quantum Graphs}, Amer. Math. Soc., Providence, R.I., 2013.

\bibitem{BHPW11}
B.M. Brown, V. Hoang, M. Plum and I.G. Wood, \emph{Floquet-Bloch theory for elliptic problems with discontinuous coefficients}, Spectral theory and analysis, Oper. Theory Adv. Appl. 214, Birkh\"{a}user, Basel, 2011, 1--20.

\bibitem{EK15}
P.~Exner and A.~Khrabustovskyi,  \emph{On the spectrum of narrow Neumann waveguide with periodically distributed 
$\delta'$
traps}, J. Phys. A: Math. Theor. \textbf{48} (2015), No. 31, 315301.

\bibitem{EP05}
P.~Exner and O.~Post,  \emph{Convergence of spectra of graph-like thin manifolds}, J. Geom. Phys. \textbf{54} (2005), No. 1,  77--115.

\bibitem{K66}
T. Kato, \emph{Perturbation Theory for Linear Operators}, Springer-Verlag, Berlin, 1966.

\bibitem{Kh14}
 A.~Khrabustovskyi,  \emph{Opening up and control of spectral gaps of the Laplacian in periodic domains}, {J. Math. Phys.}  \textbf{55} (2014), No. 12, 121502.

\bibitem{Ku93}
P. Kuchment, \emph{Floquet Theory for Partial Differential Equations}, Birkh\"auser, Basel, 1993.

\bibitem{RS78}
M. Reed and B. Simon,  \emph{Methods of Modern Mathematical Physics IV: Analysis of Operators}, Academic Press, New York-London, 1978.

\bibitem{S78} B.~Simon, \emph{A canonical decomposition for quadratic forms with
applications to monotone convergence theorems}, J. Funct. Anal. \textbf{28} (1978), No. 3, 377--385.


\end{thebibliography}
\end{document}